\renewcommand{\paragraph}[1]{\smallskip\noindent\textbf{\textsf{#1}}}
\setlist[enumerate]{nosep}
\newcounter{casecounter}
\newcounter{subcasecounter}
\DeclareMathOperator{\odd}{odd}
\DeclareMathOperator{\even}{even}
\newlength{\myx} 
\newlength{\myy} 
\newcommand\includegraphicstotab[2][\relax]{%
\settowidth{\myx}{\includegraphics[{#1}]{#2}}%
\settoheight{\myy}{\includegraphics[{#1}]{#2}}%
\parbox[c][1.1\myy][c]{\myx}{%
\includegraphics[{#1}]{#2}}%
}
\newcommand{\ccase}[1]{%
  \stepcounter{casecounter}%
  \setcounter{subcasecounter}{0}%
  \protected@write \@auxout {}{\string \newlabel {#1}{{\thecasecounter}{\thepage}{\thecasecounter}{#1}{}} }%
  \hypertarget{#1}{\noindent\textbf{Case \thecasecounter.}}
}
\newcommand{\subcase}[1]{%
  \stepcounter{subcasecounter}%
  \protected@write \@auxout {}{\string \newlabel {#1}{{\thecasecounter.\thesubcasecounter}{\thepage}{\thecasecounter.\thesubcasecounter}{#1}{}} }%
  \hypertarget{#1}{\noindent\textbf{Case \thecasecounter.\thesubcasecounter.}}
}
\newcommand{\pc}{\ensuremath{\mathcal{P}}\xspace}
\newtheorem{observation}{Observation}
\newenvironment{pf}{\textit{Proof.~}}{}
\let\doendproof\endproof
\renewcommand\endproof{~\hfill$\qed$\doendproof}
\title{On Gallai's conjecture for series-parallel graphs and planar 3-trees\thanks{
  The work of P.~Kindermann and A.~Schulz was supported by DFG grant SCHU~2458/4-1.}}
\author{
Philipp Kindermann\inst{1}
\and Lena Schlipf\inst{1}
\and Andr\'e Schulz\inst{1}}
\institute{LG Theoretische Informatik,
  FernUniversit\"at in Hagen, Germany, 
  \email{ \{philipp.kindermann | lena.schlipf | andre.schulz\}@fernuni-hagen.de}
 }
\authorrunning{Kindermann, Schlipf, and Schulz}
\titlerunning{On Gallai's conjecture} 
\begin{document}
\maketitle

\begin{abstract}
A path cover is a decomposition of the edges of a graph into edge-disjoint simple paths.
Gallai conjectured that every connected $n$-vertex graph has a path cover with at most $\lceil n/2 \rceil$ paths.
We prove Gallai's conjecture for series-parallel graphs. For the class of planar 3-trees we show how to 
construct a path cover with at most $\lfloor 5n/8 \rfloor$ paths, which is an improvement over
the best previously known bound of $\lfloor 2n/3 \rfloor$.
\end{abstract}

\section{Introduction}
Let $G=(V,E)$ be a simple connected graph, with $|V|=n$ vertices. We say that~\pc is a \emph{path cover} of~$G$
if \pc is a collection of edge-disjoint simple paths, such that every edge of $G$ is contained in exactly 
one of the paths. The size of a path cover is the number of paths it contains.  
Following a question of Erd\H{o}s, Gallai conjectured that 
every simple connected graph with $n$ vertices has a path cover of size $\lceil n/2 \rceil$~\cite{L68}.
This conjecture is still open. It is easy to see that we cannot hope for a smaller bound, 
since a triangle needs a path cover of size~2.

In~1968, Lov\'asz proved that every simple connected graph can be covered with 
edge-disjoint simple paths and simple cycles such that the total number of paths and 
cycles does not exceed $\lfloor n/2 \rfloor$~\cite{L68}. In every odd-degree vertex, at least one path
has to have an endpoint. Hence, Lov\'asz result shows that Gallai's conjecture holds for the
case where every vertex of the graph has odd degree. Since we can always split a cycle into two paths,
it also follows that every graph has a path cover of size $n$. In subsequent work, the ideas
of Lov\'asz have been further exploited. Donald fixes an error in Lov\'asz' proof and also improves
the bound of the path cover size to $\lfloor 3n/4\rfloor$~\cite{D80}. Pyber showed that
Gallai's conjecture \emph{holds asymptotically}, which means that there is always a path
cover of size $n/2 + O(n^{3/4})$~\cite{P96}. In the same paper, it is also shown that if
a graph is $k$-connected then it has a path cover of size~$\lfloor n/2 \rfloor + \lceil n/2k \rceil$.
In 2000, Dean and Kouider proved that every graph has a path cover of size~$\lfloor 2n/3 \rfloor$~\cite{DK00},
which is currently the best known result for general graphs. 

If the graph is known to have a certain number of odd-degree vertices, then the above bounds can be improved.
In particular, if $G$ has~$n_\text{odd}$ odd-degree vertices and $n_\text{even}$ even-degree vertices, 
then the bound of Dean and Kouider can be  restated as $n_{\odd}/2+\lfloor 2n_{\even}/3\rfloor$.
Furthermore, it is known that Gallai's conjecture holds for graphs in which the graph induced by the 
even-degree vertices is a forest~\cite{P96}.

Gallai's conjecture was proven for certain graph classes. It is obvious that the conjecture holds for trees.
It has also been proven for outerplanar graphs~\cite{GFL14}. Recently, Bonamy and Perrett proved the conjecture 
for graphs with maximum degree~5~\cite{BP16}.

\begin{figure}[tb]
  \centering
  \includegraphics{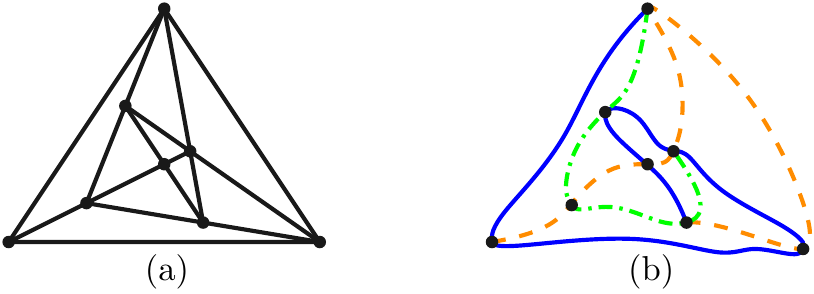}
  \caption{(a) A graph drawing with visual complexity (segments) of~9, and (b) The same graph with a path cover of size~3.}
  \label{fig:introexample}
\end{figure}

\paragraph{Motivation.} Our interest for Gallai's conjecture originates 
from an application in graph drawing. A recently proposed 
drawing criteria asks for the minimal number of geometric objects (e.g. subdivided straight-line segments, or
circular arcs) that are necessary to draw a planar graph as an arrangement~\cite{S15}. 
This quantity is known as the \emph{visual complexity} of a drawing.
Fig.~\ref{fig:introexample}
shows an example of a drawing with low visual complexity.
For most planar graph classes, known bounds for the visual complexity are not tight. The question that
arises from Gallai's conjecture can be seen as a graph-theoretic version of this problem.
In particular, any \enquote{arrangement drawing} induces a pseudoline arrangement and hence 
a path cover. However, not every path cover obtained in this way can be realized with paths drawn as 
straight polygonal chains~\cite{S90}. 
Any lower bound on the size of the path cover is obviously a lower bound for 
the visual complexity. Table~\ref{tbl:results} gives
an overview of the current bounds for both problems. Interestingly, for
some graph classes both problems have the same bound, but for other classes there is a significant difference between the graph-theoretic and the geometric version.

We could not find the lower bound for the series-parallel
graphs in the literature, but it is not difficult to see that the graph depicted in Fig.~\ref{fig:lb}
needs at least $\lfloor 3n/2\rfloor$ segments. In particular, if we align two edges, then it is impossible to \enquote{save}
further edges for the two incident triangles. Thus, for every two vertices we add, we might need three more 
segments.

\begin{figure}[b]
	\centering
	\includegraphics[scale=.7]{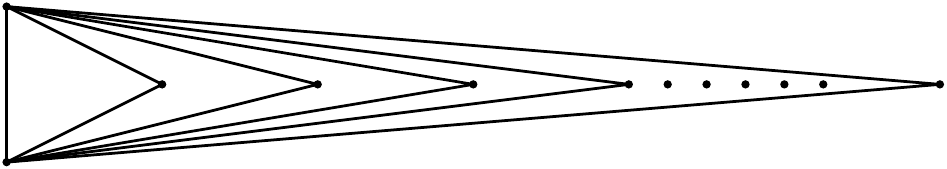}
	\caption{A series-parallel graph that needs $\lfloor 3n/2\rfloor$ segments when drawn as an arrangement.}
	\label{fig:lb}
\end{figure}

\paragraph{Results.} In the following Section~\ref{sec:sp}, we prove Gallai's conjecture for 
series-parallel graphs. Note that a graph is a partial 2-tree if and only if 
each biconnected component is a
series-parallel graph. We also make progress for the class of planar 3-trees by improving the
current bound for the necessary path cover size from $\lfloor 2n/3\rfloor$ 
to $\lfloor 5n/8\rfloor$. This result is presented in Section~\ref{sec:3trees}.

\begin{table}[t]
\centering
\caption{Bounds on the visual complexity (line segments) and on path cover sizes. 
Here, $n$ is the number of vertices, $n_\text{odd}$ the number of odd-degree vertices. Constant additions or subtractions have been omitted.}
\label{tbl:results}
\medskip
\begin{tabular}{l r p{.2cm}r p{8mm} r p{.2cm} c}
\toprule
\textbf{Class} & \multicolumn{3}{c}{\textbf{min. vis. compl. segments}} && \multicolumn{3}{c}{\textbf{min. size of path cover}}\\
 &  u.b. & & l.b && & u.b. &  \\
\midrule
Trees & $n_\text{odd}/2$ \cite{DESW07} && $n_\text{odd}/2$  & &$n_\text{odd}/2$  &&   \\
maximal outerplanar \ \  & $n$~\cite{DESW07}&& $n$~\cite{DESW07} & & $n/2$  && \cite{GFL14} \\
series parallel & $3n/2$~\cite{DESW07}  && $3n/2$ & & $\boldsymbol{n/2}$ && \textbf{Thm.~\ref{thm:seriesparallel}} \\
planar 3-trees & $2n$ \cite{DESW07} && $2n$ \cite{DESW07}  && $\boldsymbol{5n/8}$ && \textbf{Thm.~\ref{thm:planar3tree}} \\
cubic 3-connected & $n/2$  \cite{IMS15}&& $n/2$  &&  $n/2$ && \cite{BP16}  \\
triangulations & $7n/3$  \cite{DM14}&&$2n$ \cite{DESW07} && $ 2n/3$ && \cite{DK00}  \\
\bottomrule
\end{tabular}
\end{table}

\section{Series-parallel Graphs}
\label{sec:sp}
An \emph{$st$-graph} is an directed acyclic graph that has a unique source vertex~$s$ and a unique sink vertex~$t$.
A \emph{series-parallel} graph is an $st$-graph that is defined as follows.
\begin{enumerate}
  \item The graph $G=(\{s,t\},\{(s,t)\})$ is a series-parallel graph with
  	source $s$ and sink $t$.
  \item If $G_1$ is a series-parallel graph with source~$s_1$ and sink~$t_1$ 
  	and~$G_2$ is a series-parallel graph with source~$s_2$ and sink~$t_2$,
  	then the graph obtained by identifying~$t_1$ with~$s_2$ is a series-parallel
  	graph with source~$s_1$ and sink~$t_2$; this is called a 
  	\emph{series composition}.
  \item If $G_1$ is a series-parallel graph with source~$s_1$ and sink~$t_1$ 
  	and~$G_2$ is a series-parallel graph with source~$s_2$ and sink~$t_2$,
  	then the graph obtained by identifying~$s_1$ with~$s_2$ and~$t_1$ with~$t_2$
  	is a series-parallel graph with source~$s_1$ and sink~$t_1$; this is called
  	a \emph{parallel composition}.
\end{enumerate}
Although series-parallel graphs are defined as directed graphs, the paths in the 
path cover do not need to respect this orientation. In other words, we 
ignore the orientation of the edges once the series-parallel graph is constructed. 
A series-parallel graph is naturally associated with an ordered binary tree, 
called the \emph{SPQ-tree}. An SPQ-tree has three types of nodes:
\begin{description}
  \item[Q-node:] representing a single edge;
  \item[S-node:] a series composition between its children
    by identifying the sink of the left child with the source of the right child; and
  \item[P-node:] a parallel composition between its children.
\end{description}

A node of an SPQ-tree is a leaf if and only if it is a Q-node, and the number
of Q-nodes is exactly the number of edges in the underlying graph. Each SPQ-tree
represents a unique series-parallel graph, but there might be several SPQ-trees
for a given series-parallel graph. We only consider SPQ-trees with the property
that no S-node has an S-node as its left child and every P-node has an S-node as 
its left child. 
Such an SPQ-tree can be constructed in linear time as follows.

\begin{lemma}
  We can construct an SPQ-tree with the property that no S-node has an S-node as 
  its left child and every P-node has an S-node as its left child for
  every series-parallel graph with~$n$ vertices in $O(n)$ time.
\end{lemma}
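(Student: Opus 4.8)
The plan is to start from an arbitrary SPQ-tree for the given series-parallel graph, which is well known to be constructible in linear time (e.g., via an SPQR-tree decomposition restricted to series-parallel graphs, or directly from the recursive composition structure). The standard SPQ-tree already has the property that every P-node has exactly two children, that S-nodes represent series compositions, and that Q-nodes are leaves; the only thing missing is the two structural normalizations claimed, namely that no S-node has an S-node as left child, and that every P-node has an S-node as left child. I would establish both by local rewriting rules applied to the tree.

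First I would handle the S-node normalization. A series composition is associative: if an S-node $u$ has a left child that is itself an S-node $v$, then the chain of series compositions represented by $u$ and $v$ describes the same concatenation of subgraphs regardless of how we parenthesize it. Hence I can rotate the tree so that $v$'s left subtree becomes the left subtree of $u$ and $v$'s right subtree is pushed down into a new right-leaning S-node together with $u$'s old right subtree. Iterating this rotation makes every maximal series composition right-leaning (a "right comb"), which guarantees that no S-node has an S-node as its left child. The key point is that each rotation strictly decreases a potential function (e.g., the total left-depth of S-nodes), so the process terminates, and each rotation takes constant time, giving linear total time over a tree of size $O(n)$.

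Next I would enforce that every P-node has an S-node as its left child. The obstacle here is that a P-node's child might be a Q-node (a single edge) rather than an S-node. The fix is to reinterpret a single edge $(s,t)$ as a trivial series composition: I replace a Q-node child by an S-node whose children realize the same edge, or more cleanly, I insert a dummy S-node above any non-S child of a P-node, using the fact that a series-parallel subgraph with source $s$ and sink $t$ can always be written as a (possibly trivial) series composition. Since parallel composition is commutative, I am free to choose which of the two children of a P-node becomes the left one, so whenever at least one child is an S-node I simply put it on the left; only when neither child is naturally an S-node do I perform the insertion. Each such modification is local and adds only a constant number of nodes per P-node, so the tree size stays in $O(n)$ and the total time remains linear.

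The main obstacle I expect is bookkeeping rather than conceptual: I must check that the two normalizations do not interfere, i.e., that enforcing the P-node property does not reintroduce an S-node as the left child of another S-node, and that the associativity rotations preserve the underlying graph and its source/sink labels exactly. I would argue this by noting that the P-node fix only inserts S-nodes directly below P-nodes (never below S-nodes), so it cannot create an $S$-over-$S$ left-child violation, and that the S-node rotations manipulate only the internal shape of maximal series chains, leaving P-nodes and their children untouched. Finally I would observe that since each rewriting step strictly reduces the relevant potential and runs in constant time, and the tree has $O(n)$ nodes throughout, the entire construction completes in $O(n)$ time, which establishes the lemma.
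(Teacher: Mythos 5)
Your first normalization step (rotating maximal series chains into right combs via associativity) is essentially the paper's construction for its \emph{S-components} and is fine. The genuine gap is in your second step. You propose that when neither child of a P-node is an S-node you will ``insert a dummy S-node above any non-S child,'' justified by the claim that a series-parallel graph can always be written as a possibly trivial series composition. That is false for the two cases that actually arise: a Q-node child is a single edge $(s,t)$, and writing it as a series composition of two nonempty pieces necessarily introduces a new internal vertex, i.e., changes the underlying graph; and an S-node in this framework is a \emph{binary} node, so a unary ``wrapper'' S-node is not a legal SPQ-tree node at all. Worse, even if one tolerated such a wrapper syntactically, the lemma is used downstream precisely because an S-node root guarantees a path cover of type $I_S$ with at most $(n_1-1)/2+\rho_1$ paths; a fake S-node over a single edge would then promise $1/2$ a path for one edge, so the induction in Lemma~\ref{lem:sp-covertypes} would break.

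You also do not address the case that a P-node has a P-node as a child, where again neither child is an S-node and your commutativity swap does not apply. The correct repair, which is what the paper does, is to treat each maximal connected component of P-nodes as a single multiway parallel composition: its external children are all S-nodes except for at most one Q-node (two parallel Q-nodes would be a multi-edge in a simple graph), and since parallel composition is associative and commutative you may flatten the component into a right-leaning path of P-nodes whose left children are exactly the S-node children, with the lone Q-node (if present) attached as the final right child. No node is ever inserted, the represented graph is unchanged, and the at-most-one-Q-node count is exactly what guarantees that every P-node receives an S-node on its left. Your termination and running-time arguments are fine, but as written the construction does not produce a valid SPQ-tree of the same graph, so the proof does not go through without this fix.
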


\begin{proof}
  Let~$G$ be a series-parallel graph with~$n$ vertices. We first use the algorithm
  by Valdes et al.~\cite{VTL82} to construct \emph{some} SPQ-tree~$T=(V,E)$ of~$G$
  in $O(n)$ time. (Fig.~\ref{fig:SPQ1}+\ref{fig:SPQ2} show an example of a series-parallel graph $G$ and a SPQ-tree of $G$.) 
  	\begin{figure}[t]
	   \centering
		 \subcaptionbox{ \label{fig:SPQ1}}  
      {\includegraphics[width=0.25\textwidth,page=1]{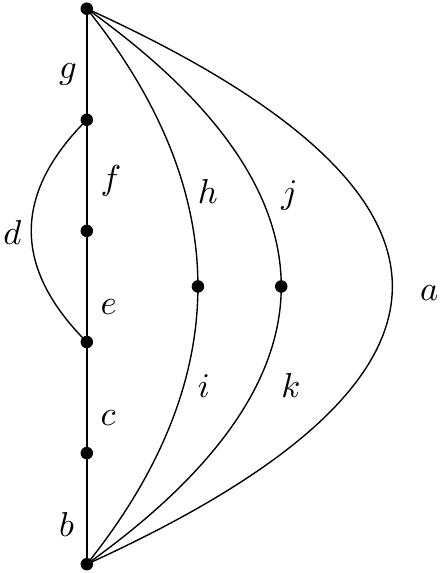}}
    \hfill
  \subcaptionbox{ \label{fig:SPQ2}}  
      {\includegraphics[scale=.75,page=1]{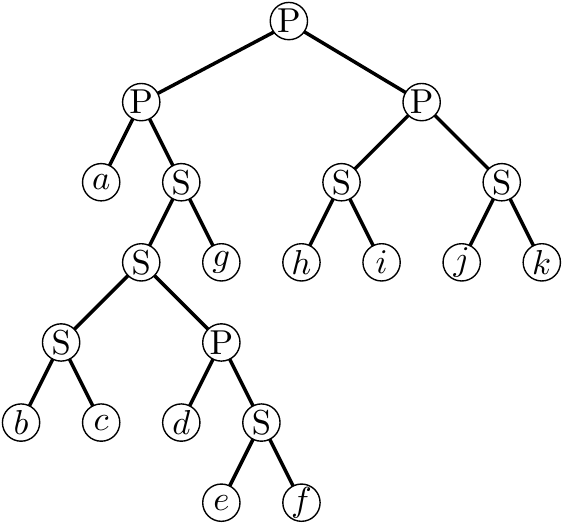} }
    \hfill
  \subcaptionbox{ \label{fig:SPQ3}}  
      {\includegraphics[scale=.75,page=1]{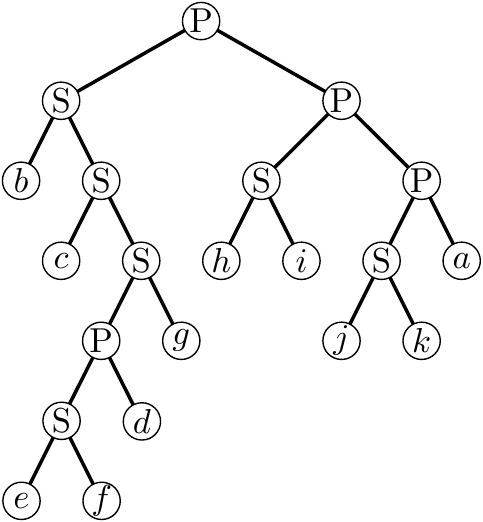} }
		\caption{(a) A series-parallel graph $G$, (b) an SPQ-tree $T$ of $G$,
      and (c) the SPQ-tree~$T^*$ for $G$ that has the property that
      no S-node has an S-node as its left child and every P-node has an S-node as 
      its left child.}
	\end{figure}
	
  We first create an SPQ-tree~$T'$ that represents~$G$ and has the property that
  no S-node has an S-node as its left child. Let~$S_1,\ldots,S_k$ be the 
  maximal connected components of~$T$ that contain only S-nodes and denote
  them by \emph{S-components}. Obviously, the S-components are pairwise 
  vertex-disjoint and there is no edge between any pair of S-components.
  We aim to create a sequence of 
  SPQ-trees $T_0,\ldots,T_k$ such that each SPQ-tree $T_i=(V,E_i), 0\le i\le k$
  \begin{enumerate}[label=(\roman*)]
    \item\label{prop:spq-ind1} has the same vertex set as~$T$, 
    \item\label{prop:spq-ind3} $S_1\ldots,S_k$ are the S-components of $T_i$, and
    \item\label{prop:spq-ind2} each component $S_1,\ldots,S_i$ in~$T_i$ is 
      a path that consists only of right edges, that is, edges from a parent to
      its right child.
  \end{enumerate}
  
  For~$T_0:=T$, the property holds trivially. Suppose that we have created an
  SPQ-tree~$T_i,0\le i\le k-1$ that satisfies this property. We create the
  SPQ-tree~$T_{i+1}$ as follows. Let $V'=\{v_1,\ldots,v_\ell\}$ be the vertex set 
  of~$S_{i+1}$ and let $U=\{u_1,\ldots,u_m\}$ be the children 
  of~$v_1,\ldots,v_\ell$ in $V\setminus V'$ such that both $v_1,\ldots,v_\ell$ 
  and $u_1,\ldots,u_m$ are ordered by an in-order traversal of~$T_i$. 
  By property~\ref{prop:spq-ind3}, all vertices in~$U$ are either P-nodes 
  or Q-nodes. The subtree of~$T_i$ induced by~$V'\cup U$ is a binary tree with 
  leaf set~$U$, so we have~$m=\ell+1$.
  Let~$v^*\in V'$ be the root of~$S_{i+1}$ in~$T_i$, let $T_i[v^*]$ be the maximal 
  subtree of~$T_i$ rooted in~$v^*$ and let~$T_i[u_j]$ be the maximal subtree 
  of~$T_i$ rooted in~$u_j,1\le j\le m$. 
  
  By property~\ref{prop:spq-ind3}, all vertices in $U$ are either P-nodes or
  Q-nodes. By the properties of an SPQ-tree, the subtree $T_i[v^*]$ represents
  a series composition of the subtrees $T_i[u_1],\ldots,T_i[u_m]$ in this order,
  that is, we can create the subgraph of~$G$ represented by $T_i[v^*]$ by doing
  a series composition on $T_i[u_1]$ with $T_i[u_2]$, then a series composition
  on the resulting graph with $T_i[u_3]$, and so on. We now create the 
  SPQ-tree~$T_{i+1}$ as follows: We remove all edges from the subtree induced
  by~$V'\cup U$. That leaves us with $2\ell+1$ connected components: 
  $\ell+1$ components for the induced subtrees $T_i[u_1],\ldots,T_i[u_{\ell+1}]$, 
  $\ell-1$ components each of which contains exactly one vertex of~$V'\setminus\{v^*\}$,
  and one component that contains the remaining vertices and has~$v^*$ as a 
  child. For $1\le j\le \ell$, we now connect~$u_j$ as the left child of~$v_j$
  and~$v_{j+1}$ as the right child of~$v_j$. Finally, we add~$u_{\ell+1}$ as
  the right child of~$v_{\ell}$; see Fig.~\ref{fig:S-component} for an example of this construction. By construction, the maximal induced subgraph
  $T_{i+1}[v^*]$ of $T_{i+1}$ has the same vertex set as~$T_i[v^*]$ and 
  represents the same graph; since we did not change the rest of~$T_i$, 
  property~\ref{prop:spq-ind1} is fulfilled for~$T_{i+1}$.
  All edges between vertices of~$S_{i+1}$ in~$T_{i+1}$
  are right edges, so property~\ref{prop:spq-ind2} is fulfilled for~$T_{i+1}$.
  We remove only edges incident to vertices of~$V'$, so we did not change
  the S-components $S_1\ldots,S_i,S_{i+2},\ldots,S_k$. Since all edges that we 
  added are either between two vertices of~$V'$ or
  between a vertex of~$V'$ and a P-node or a Q-node, we did not add an edge that
  connects~$S_{i+1}$ with any other S-component, so property~\ref{prop:spq-ind3}
  is fulfilled. We obtain the SPQ-tree~$T'$ of~$G$ by setting~$T'=T_k$.
  
	\begin{figure}[t]
  \centering
	\includegraphics[scale=.8]{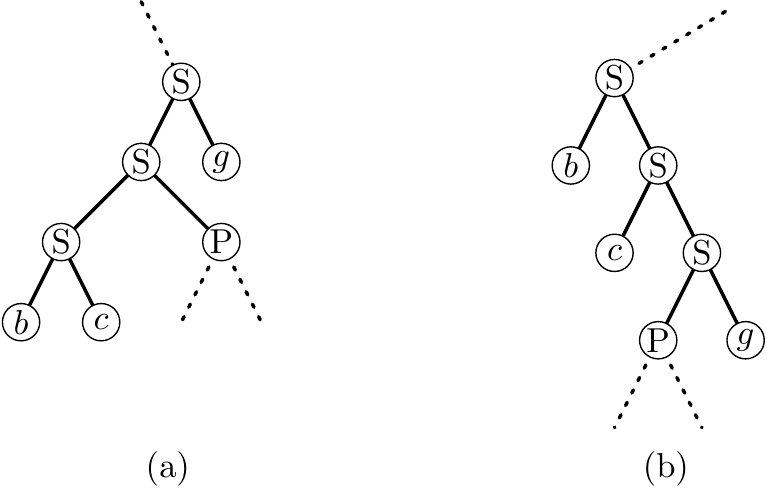}
	\caption{(a) An S-component in $T$ and (b) the modified component such that no $S$-node has an S-node as is left child.   }\label{fig:S-component}
	\end{figure}

  Since the leaves of an SQP-tree represent edges of the represented graph,
  there are~$2m-1$ vertices in~$T$ and, since the graph is series-parallel, $m\leq 2n-3$ (where $m$ is the number of edges). Hence, we can find the S-components in 
  $O(n)$ time by removing all edges that are not incident to two S-nodes and 
  taking the resulting connected components that contain $S$-nodes as S-components. We can fix an
  S-component of size $\ell$ in $O(\ell)$ time as described above because 
  our in-order traversals only have to traverse the vertices of the S-component
  and their children. Since the S-components are vertex-disjoint, we can thus
  fix all S-components in $O(n)$ time total, which yields the graph $T'$.
  
  We now create an SPQ-tree~$T^*$ that represents~$G$ and has the property that
  no S-node has an S-node as its left child and every P-node has an S-node as 
  its left child. The procedure works analogously as above by iteratively fixing
  the \emph{P-components} of~$T$, that is, the maximal connected components 
  $P_1,\ldots,P_\lambda$ of~$T$ that contain only P-nodes. The children of each
  P-component are all either S-nodes or Q-nodes; however, only one of them can
  be a Q-node, as there would be a multi-edge otherwise. In contrary to the
  S-nodes, changing the order of the children of a P-node does not change the
  graph. Hence, we can take any order on the children of a P-component when
  connecting them to the fixed P-component. In particular, we choose the 
  child that is a Q-node (if it exists) as the last child in this order. By
  this choice, it will be connected to the last P-node of the P-component
  (which is a path after fixing it) as a right child. Thus, all left children
  of the P-nodes are S-nodes; Fig.~\ref{fig:P-component} shows an example of this construction. The running time is the same as for fixing
  the S-components, which proves this lemma.
  
		\begin{figure}[t]
    \centering
	\includegraphics[scale=.8]{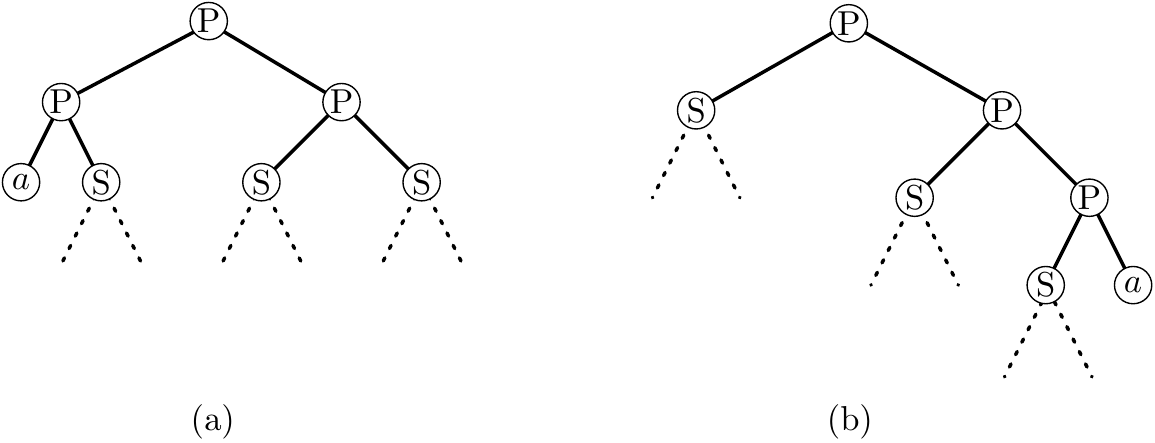}
	\caption{(a) A P-component in $T$ and (b) the modified component such that every P-node has an S-node as its left child.   }\label{fig:P-component}
	\end{figure}

\end{proof}

Given a series-parallel graph~$G$ with~$n$ vertices, we will build a path cover
of size at most~$\lceil n/2\rceil$ guided by
its SPQ-tree. 
Unfortunately, it is in general not
possible to combine a path cover of a series-parallel graph~$G_1$ with~$n_1$
vertices of size~$\lceil n_1/2\rceil$ and a path cover of a series-parallel 
graph~$G_2$ with~$n_2$ vertices of size~$\lceil n_2/2\rceil$ to a path cover
of its series or parallel composition~$G$ with~$n$ vertices of size~$\lceil n/2\rceil$. 
We create instead a path cover  
of size at most~$\lceil n/2\rceil + \rho$, where~$\rho$ is the number 
of specific substructures that
can later be used to reduce the number of paths to~$\lceil n/2\rceil$.

  We define a \emph{brace} of a path cover~$\pc$ as follows: 
  Let~$u$ and~$v$ be two vertices on a path~$P$ of~$\pc$
  and let~$P_3\subseteq P$ be the part of this path from~$u$ to~$v$.
  A brace~$B$ between~$u$ and~$v$ consists of~$P_3$
  and two more paths~$P_1$ and~$P_2$ from~$\pc$ that have~$u$ and~$v$ as their endpoints.
  The three paths are not allowed to share a vertex other than~$u$ and~$v$;
  see Fig.~\ref{fig:brace}a. We call the vertices in~$B$ different from~$u$
  and~$v$ \emph{interior vertices} and the paths $P_1$, $P_2$, and $P_3$ 
  \emph{interior disjoint}. In the following,~$\rho$
  denotes the number of braces a path cover has.

\begin{figure}[t]
\centering
\includegraphics{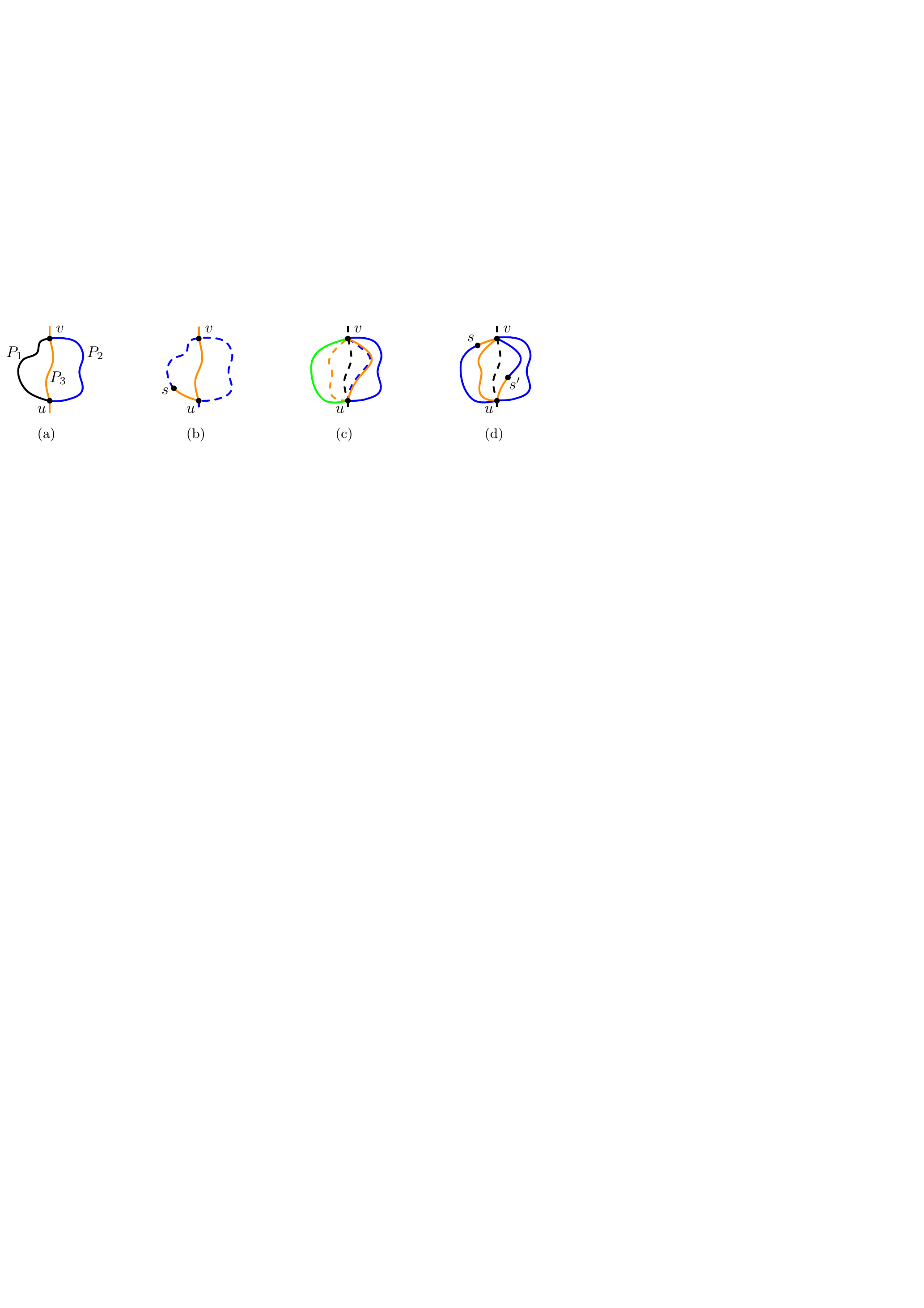}
\caption{(a) A brace between $u$ and $v$. (b) Removing a single brace. (c) $B$ (solid) and $B'$ (dashed) have a common path: the second one from the right. A common path of two braces is always the path $P_3$ from the brace that was created earlier. (d) Removing two braces. }
\label{fig:brace}
\end{figure}

We now define different types of path covers for series-parallel graphs.
Let~$G$ be a series-parallel graph with~$n$ vertices, source~$s$ and
sink~$t$. Table~\ref{tab:pathtypes} summarizes these types and their
number of paths.
\begin{itemize}
\item[$I_P$:] A path cover is of type~$I_P$ if it contains an $s$-$t$-path and 
if it has size at most~$n/2+\rho$.
\item[$I_S$:] A path cover is of type~$I_S$ if it contains an $s$-$t$-path and 
if it has size at most~$(n-1)/2+\rho$. Note that this type is the same 
as~$I_P$, but requires fewer paths.
\item[$O$:] A path cover is of type~$O$ if it contains two 
interior-vertex-disjoint $s$-$t$-paths and if it has size at 
most~$(n+1)/2+\rho$.
\item[$L$:] A path cover is of type~$L$ if it contains an $s$-$t$-path
and a path that starts in~$s$ and does not include~$t$ and if it has size at
most~$n/2+\rho$.
\item[$\Gamma$:] A path cover is of type~$\Gamma$ if it contains an $s$-$t$-path
and a path that starts in~$t$ and does not include~$s$ and if it has 
size at most~$n/2+\rho$. 
\end{itemize}

We group these types of path covers into two classes of 
types $\Pi=\{I_P,O\}$ and $\Sigma=\{I_S,L,\Gamma\}$.
We show next that each series-parallel graph admits a path cover of 
one of these types.

\begin{table}[tb]
  \centering
  \caption{The two classes of types of path covers, and their respective number 
    of paths minus the number of braces.}
  \label{tab:pathtypes}
  \begin{tabular}{@{\quad}l@{\quad}|@{\quad}c@{~~}cc@{\quad}|@{\quad}c@{\qquad}c@{\qquad}c@{\quad}}
    cover class & &$\Pi$& & & $\Sigma$ &  \\
    \hline
    path cover type & \includegraphicstotab[scale=.9]{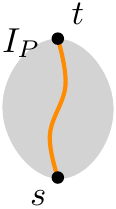} && \includegraphicstotab[scale=.9]{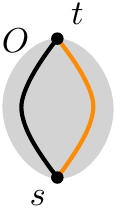}  &  \includegraphicstotab[scale=.9]{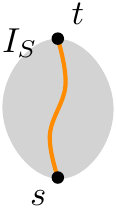}  & \includegraphicstotab[scale=.9]{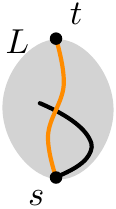} &\includegraphicstotab[scale=.9]{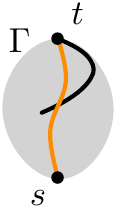}  \\
    \hline
    \hspace*{-2ex} max. \# of paths - $\rho\!\!\!\!$  & $n/2$ && $(n+1)/2$ & $(n-1)/2$ & $n/2$ & $n/2$
  \end{tabular}
\end{table}

\begin{lemma}\label{lem:sp-covertypes}
  Each series-parallel graph admits a path cover of type~$I_P$ if the root of
  its SPQ-tree is a Q-node, a path cover of a type in~$\Pi$ if the root of its
  SPQ-tree is a P-node, and a path cover of a type in~$\Sigma$ if the root of its
  SPQ-tree is an S-node.
	The path cover can be computed in linear time.
\end{lemma}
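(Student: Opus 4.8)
The plan is to prove the statement by structural induction on the normalized SPQ-tree, with the inductive hypothesis being exactly the three-way claim: a subtree rooted at a Q-node admits a cover of type~$I_P$, one rooted at a P-node a cover of a type in~$\Pi$, and one rooted at an S-node a cover of a type in~$\Sigma$. The base case is immediate: a Q-node represents a single edge~$st$, and the cover~$\{st\}$ is an $s$-$t$-path of size $1=n/2$ with $\rho=0$, so it has type~$I_P$. For the inductive step I use the two guarantees of the normalization: an S-node has its left child in~$\Pi$ (a P- or Q-node) and its right child a P-, Q-, or S-node, hence in~$\Pi\cup\Sigma$; a P-node has its left child in~$\Sigma$ (an S-node) and its right child again in~$\Pi\cup\Sigma$. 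In either step I write $G=G_L\circ G_R$ for the relevant composition, take covers of the guaranteed types for~$G_L$ and~$G_R$, and assemble a cover~\pc of~$G$, whose brace count~$\rho$ equals $\rho_L+\rho_R$ plus any braces I create. Each of the five types supplies at least one $s$-$t$-path of its component, so the two child $s$-$t$-paths are always available as building blocks; I abbreviate the child bounds as $(n_\bullet+c_\bullet)/2+\rho_\bullet$ with offset $c=-1,0,+1$ for the smaller ($I_S$), standard ($I_P,L,\Gamma$), and larger ($O$) types.

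For an S-node, $G$ is the series composition identifying the sink of~$G_L$ with the source of~$G_R$ as a vertex~$w$, so $n=n_L+n_R-1$. I always splice the two child $s$-$t$-paths at~$w$ into one $s$-$t$-path, which saves a path and yields size at most $(n-1+c_L+c_R)/2+\rho$. If $c_L+c_R\le 0$ this is the $I_S$ bound and the single $s$-$t$-path certifies type~$I_S$. If $c_L+c_R=1$ then exactly one child has type~$O$, and its unused second $s$-$t$-path becomes, in~$G$, a path emanating from~$s$ when the $O$-child is~$G_L$ (giving~$L$) or from~$t$ when it is~$G_R$ (giving~$\Gamma$), matching the $n/2+\rho$ bound. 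The only case needing a further operation is $(O,O)$ with $c_L+c_R=2$: here the two unused half-paths meet at~$w$, and splicing them into a second $s$-$t$-path saves one more path and returns to the $I_S$ bound.

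For a P-node, $G$ is the parallel composition identifying the two sources as~$s$ and the two sinks as~$t$, so $n=n_L+n_R-2$, and the two child $s$-$t$-paths are interior-disjoint, giving a baseline of type~$O$. The plain union has size $(n+2+c_L+c_R)/2+\rho$, so I must shave off the surplus. The key device is the brace: when the right child has type~$O$, the three pairwise interior-disjoint $s$-$t$-paths (one from~$G_L$, two from~$G_R$) form a brace, which I register, raising~$\rho$ by one and relaxing the bound by one, exactly absorbing the $c_R=+1$. Any remaining surplus comes solely from pendant paths at~$s$ (from a type-$L$ child) or at~$t$ (from a type-$\Gamma$ child): two pendants at the same endpoint I splice there, saving a path while keeping both $s$-$t$-paths (staying type~$O$), whereas a lone pendant at~$s$ (resp.~$t$) I splice into the opposite child's $s$-$t$-path at~$s$ (resp.~$t$), which saves a path but consumes one of the two $s$-$t$-paths and hence downgrades the target to~$I_P$. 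Running through the finitely many combinations of $c_L\in\{-1,0\}$ (as $G_L\in\Sigma$) and $c_R\in\{-1,0,+1\}$ confirms that after these operations the size always meets the $O$-bound $(n+1)/2+\rho$ or the $I_P$-bound $n/2+\rho$.

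The hard part will be the bookkeeping in the P-node step rather than any single construction. I must verify, over every pairing of child types, that the promised $s$-$t$-path(s) survive the splices at~$s$ and~$t$; that each splice is legitimate, which it is because the two spliced paths meet only at the splice vertex (as $G_L$ and $G_R$ share only~$s$ and~$t$); and that no inherited brace is destroyed, which I ensure by only ever splicing the distinguished $s$-$t$- and pendant paths named by a child's type, keeping them disjoint from the paths of that child's braces. For the running time I process the SPQ-tree bottom-up; each node performs a constant number of endpoint splices and at most one brace registration, each in $O(1)$ given pointers to the relevant path ends, and since a series-parallel graph has $O(n)$ edges its normalized SPQ-tree has $O(n)$ nodes, for $O(n)$ total.
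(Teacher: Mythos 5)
Your proof is essentially the paper's: the same induction on the normalized SPQ-tree, the same five types with the same size bounds, the same splices of designated $s$-$t$- and pendant paths at the shared vertices, and the same single brace creation for the $I_S{+}O$ parallel case. One caution on the bookkeeping you defer to a case check: for a parallel composition of an $L$ or $\Gamma$ left child with an $O$ right child, your two rules conflict if applied together --- splicing the pendant into one of the $O$-child's $s$-$t$-paths extends that path past $s$ (or $t$), so it no longer has $s$ and $t$ as endpoints and cannot serve as $P_1$ or $P_2$ of a registered brace; in this one combination you must forgo the brace, perform only the splice, and use the two surviving interior-disjoint $s$-$t$-paths to certify type $O$ at the bound $(n+1)/2+\rho$, which is exactly what the paper does in its Case~\ref{sc:p-LO}.
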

\begin{proof}
  Let~$G$ be a series-parallel graph with~$n$ vertices,~$m$ edges, source~$s$,
  and sink~$t$. We prove the lemma by induction on the number of Q-nodes~$m$
  in the SPQ-tree of~$G$.
  
  For~$m=1$, $G$ consists of exactly the two vertices~$s$ and~$t$ and an edge
  between them. The SPQ-tree of~$G$ consists of exactly one Q-node. We can cover 
  this edge with~1 path, which is a path cover of type~$I_P$. This is the only
  series-parallel graph with a Q-node as the root of its SPQ-tree.
  
  Now assume that we have shown the lemma for each graph with at most~$m-1$ 
  edges. In order to show that the lemma holds for~$G$, we distinguish two cases.

  \ccase{c:series} The root of the SPQ-tree of~$G$ is an S-node. 
  
  By construction
  of the SPQ-tree, the left child of the root is not an S-node. In the series
  composition, the order of the children matters since the sink of the left 
  child is identified with the source of the right child.
  Let~$G_1$ be the series-parallel graph with~$n_1$ vertices,~$m_1$ edges, $\rho_1$ braces, 
  source~$s_1$, and sink~$t_1$ represented by the SPQ-tree rooted in the left
  child of the root (which is not an S-node), and let~$G_2$ be the series-parallel graph 
  with~$n_2$ vertices,~$m_2$ edges, $\rho_2$ braces, source~$s_2$, and sink~$t_2$ represented by 
  the SPQ-tree rooted in the right child of the root. Since we have a 
  series composition, we have that $n=n_1+n_2-1$, $m=m_1+m_2$, $s=s_1$, $t_1=s_2$,
  and~$t=t_2$. Hence, $m_1,m_2<m$ and the lemma holds by induction for 
  both~$G_1$ and~$G_2$, that is, 
	$G_1$ has a path cover of a type in~$\Pi$ and $G_2$ has a path cover of a type in~$\Pi\cup\Sigma$.  
  We will now make a case analysis on the types of 
  path covers that~$G_1$ and~$G_2$ admit. The cases are summarized in 
  Table~\ref{tab:series}.
  
  \begin{table}[b]
    \centering
    \caption{The subcases of Case~\ref{c:series}: a series composition. A row represents
      a path cover type of~$G_1$, a column represents a path cover type of~$G_2$.
      A table entry contains the resulting path cover type of~$G$ and the case
      that handles this combination.}
    \label{tab:series}
    \begin{tabular}{@{\quad}l@{\quad}|@{\quad}ll@{\quad}ll@{\quad}ll@{\quad}ll@{\quad}ll@{\quad}}
               & $I_S$                  && $L$                    && $\Gamma$               && $I_P$                  && $O$\\
      \hline
      $I_P$    & $I_S$ & (\ref{sc:s-IX}) & $I_S$ & (\ref{sc:s-IX}) & $I_S$ & (\ref{sc:s-IX}) & $I_S$ & (\ref{sc:s-IX}) & $\Gamma$ & (\ref{sc:s-IO})\\
      $O$      & $L$   & (\ref{sc:s-OX}) & $L$   & (\ref{sc:s-OX}) & $L$   & (\ref{sc:s-OX}) & $L$   & (\ref{sc:s-OX}) & $I_S$ & (\ref{sc:s-OO})\\
    \end{tabular}
  \end{table}

 \begin{figure}[tb]
    \centering
    \begin{subfigure}[c]{0.15\textwidth}
      \centering
      \includegraphics[page=1]{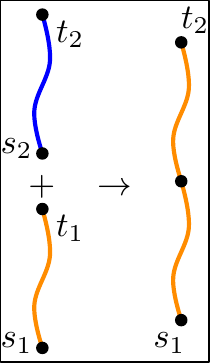}
      \subcaption{Case~\ref{sc:s-IX}: $I_P+I_P\rightarrow I_S$}
      \label{fig:CasesII}
    \end{subfigure}
    \hfill
    \begin{subfigure}[c]{0.15\textwidth}
      \centering
      \includegraphics[page=1]{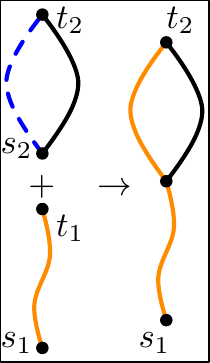}
      \subcaption{Case~\ref{sc:s-IO}: $I_P+O\rightarrow \Gamma$}
      \label{fig:CasesIO}
    \end{subfigure}  
    \hfill  
    \begin{subfigure}[c]{0.15\textwidth}
      \centering
      \includegraphics[page=1]{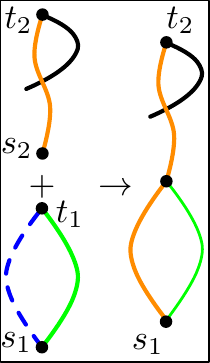}
      \subcaption{Case~\ref{sc:s-OX}: $O+\Gamma\rightarrow L$}
      \label{fig:CasesOI}
    \end{subfigure}
		 \hfill
        \begin{subfigure}[c]{0.15\textwidth}
      \centering
      \includegraphics[page=1]{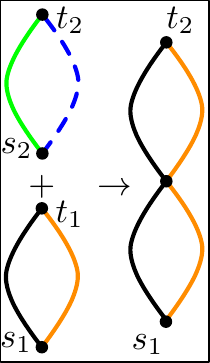}
      \subcaption{Case~\ref{sc:s-OO}: $O+O\rightarrow I_S$}
      \label{fig:CasesOO}
    \end{subfigure}
    \caption{Illustration for the subcases of Case~\ref{c:series}: a series composition.}
    \label{fig:Casesserial}
  \end{figure}

  \subcase{sc:s-IX} $G_1$ has a path cover of type~$I_P$ 
  of size a most~$n_1/2+\rho_1$, and~$G_2$ has a path cover of any type 
  from~$\Sigma\cup\Pi\setminus\{O\}$ of size at most~$n_2/2+\rho_2$.
  
  We obtain a path cover of 
  type~$I_S$ for~$G$ with $\rho=\rho_1+\rho_2$ braces of size at most
  $n_1/2+\rho_1+n_2/2+\rho_2-1=(n+1)/2+\rho-1=(n-1)/2+\rho$ by merging
  the $s_1$-$t_1$-path from~$G_1$ with the $s_2$-$t_2$-path from~$G_2$ at
  $t_1=s_2$ (note that every type contains at least one path from its source to
  its sink); see Fig.~\ref{fig:CasesII}. Since $s=s_1$ and $t=t_2$, we can choose the 
  merged path as the $s$-$t$-path from~$G$.
  
  \subcase{sc:s-IO} $G_1$ has a path cover of type~$I_P$ 
  of size at most~$n_1/2+\rho_1$, and~$G_2$ has a path cover of type~$O$ with at most~$(n_2+1)/2+\rho_2$ paths.
  
  We obtain a path cover of type~$\Gamma$ for~$G$ with $\rho=\rho_1+\rho_2$ 
  braces of size at most $n_1/2+\rho_1+(n_2+1)/2+\rho_2-1=(n+2)/2+\rho-1=n/2+\rho$ 
  by merging the $s_1$-$t_1$-path from~$G_1$ with one of the 
  $s_2$-$t_2$-paths from~$G_2$ at $t_1=s_2$; see Fig.~\ref{fig:CasesIO}. We choose
  this merged path as the $s$-$t$-path from~$G$ and the other $s_2$-$t_2$-path
  from~$G_2$ as the path from~$G$ that starts in $t=t_2$ and does not include~$s$.
  
  \subcase{sc:s-OX} $G_1$ has a path cover of type~$O$
  of size at most~$(n_1+1)/2+\rho_1$, and~$G_2$ has a path cover of any type 
  from~$\Sigma\cup\Pi\setminus\{O\}$ with at 
  most~$n_2/2+\rho_2$ paths.
  
  We obtain a path cover of type~$L$ for~$G$ with $\rho=\rho_1+\rho_2$ 
  braces of size at most $(n_1+1)/2+\rho_1+n_2/2+\rho_2-1=(n+2)/2+\rho-1=n/2+\rho$ 
  by merging one of the $s_1$-$t_1$-paths from~$G_1$ with the 
  $s_2$-$t_2$-path from~$G_2$ at $t_1=s_2$; see Fig.~\ref{fig:CasesOI}. We choose
  this merged path as the $s$-$t$-path from~$G$ and the other $s_1$-$t_1$-path
  from~$G_1$ as the path from~$G$ that starts in $s=s_1$ and does not include~$t$.
  
  \subcase{sc:s-OO} $G_1$ has a path cover of type~$O$ 
  of size at most~$(n_1+1)/2+\rho_1$, and~$G_2$ has a path cover of type~$O$ 
  of size at most~$(n_2+1)/2+\rho_2$.
  
  We obtain a path cover of type~$I_S$ for~$G$ with $\rho=\rho_1+\rho_2$ 
  braces of size at most $(n_1+1)/2+\rho_1+(n_2+1)/2+\rho_2-2=(n+3)/2+\rho-2=(n-1)/2+\rho$ 
  by merging one of the $s_1$-$t_1$-paths from~$G_1$ with one of the 
  $s_2$-$t_2$-paths from~$G_2$ at $t_1=s_2$ and the other $s_1$-$t_1$-path 
  from~$G_1$ with the other $s_2$-$t_2$-path from~$G_2$ at $t_1=s_2$; see 
  Fig.~\ref{fig:CasesOO}. We choose one of the merged paths as the $s$-$t$-path 
  from~$G$. Note that we cannot use both $s$-$t$-paths for an $O$-configuration 
  as they are not interior-vertex-disjoint (they both include $s_2=t_1$).

  This covers all combinations of series compositions.
  
  \ccase{c:parallel} The root of the SPQ-tree of~$G$ is a P-node. 
  
  By construction
  of the SPQ-tree, the left child of the root is an S-node.
  Let~$G_1$ be the series-parallel graph with~$n_1$ vertices,~$m_1$ edges, $\rho_1$ braces,
  source~$s_1$, and sink~$t_1$ represented by the SPQ-tree rooted in the 
  left child of the root (which is an S-node),
  and let~$G_2$ be the series-parallel graph 
  with~$n_2$ vertices,~$m_2$ edges, $\rho_2$ braces, source~$s_2$, and sink~$t_2$ represented by 
  the SPQ-tree rooted in the
  right child of the root. Since we have a 
  parallel composition, we have that $n=n_1+n_2-2$, $m=m_1+m_2$, $s=s_1=s_2$,
  and~$t=t_1=t_2$. Hence, $m_1,m_2<m$ and the lemma holds by induction for 
  both~$G_1$ and~$G_2$, that is, $G_1$ has a path cover of a type in~$\Sigma$
  and~$G_2$ has a path cover of a type in~$\Sigma\cup\Pi$. We will now make a 
  case analysis on the types of 
  path covers that~$G_1$ and~$G_2$ admit. The cases are summarized in 
  Table~\ref{tab:parallel}.
  
  \subcase{sc:p-II} $G_1$ has a path cover of type~$I_S$ 
  of size at most~$(n_1-1)/2+\rho_1$ and~$G_2$ has a path cover of type~$I_S$ or~$I_P$. 
  
  Since every path cover of type~$I_S$ is automatically 
  a path cover of  type~$I_P$, we assume that~$G_2$ has a path cover of 
  type~$I_P$ of size at most~$n_2/2+\rho_2$. We obtain a path cover of type~$O$ 
  for~$G$ with $\rho=\rho_1+\rho_2$ braces of size at most
  $(n_1-1)/2+\rho_1+n_2/2+\rho_2=(n+1)/2+\rho$ by combining the path 
  covers of~$G_1$ and~$G_2$; see Fig.~\ref{fig:CasepII}. We choose the $s$-$t$-path
  from~$G_1$ and the $s$-$t$-path from~$G_2$ as the two $s$-$t$-paths from~$G$.
  
  \subcase{sc:p-IO} $G_1$ has a path cover of type~$I_S$ 
  of size at most~$(n_1-1)/2+\rho_1$ and~$G_2$ has a path cover of type~$O$
  with at most $(n_2+1)/2+\rho_2$ paths. 
  
  We obtain a path cover of 
  type~$I_P$ for~$G$ with $\rho=\rho_1+\rho_2+1$ braces of size at most
  $(n_1-1)/2+\rho_1+(n_2+1)/2+\rho_2=(n+2)/2+\rho-1=n/2+\rho$ by 
  adding a brace between~$s$ and~$t$ that consists of the $s$-$t$-path 
  from the path cover of~$G_1$ and of the two $s$-$t$-paths from the path cover
  of~$G_2$; see Fig.~\ref{fig:CasepIO}. We choose the $s$-$t$-path from~$G_1$ as the 
  $s$-$t$-path from~$G$ and also as the path~$P_3$ from the new brace. Hence, the
  other two paths~$P_1$ and~$P_2$ of the new brace will not be changed in the
  future.
  
  \subcase{sc:p-IL} $G_1$ has a path cover of a type in~$\Sigma$ 
  of size at most~$n_1/2+\rho_1$ and~$G_2$ has a path cover of type~$L$ or $\Gamma$
   of size at most $n_2/2+\rho_2$.
  
  We consider the case that~$G_2$ has the $\Gamma$-configuration, the other 
  case works symmetrically. We obtain 
  a path cover of type~$I_P$ for~$G$  with $\rho=\rho_1+\rho_2$ braces of size 
  at most $n_1/2+\rho_1+n_2/2+\rho_2-1=(n+2)/2+\rho-1=n/2+\rho$ by 
  merging two paths: we take the $s$-$t$-path from~$G_1$ and the path from~$G_2$ 
  that starts in~$t$ and does not include~$s$. Because of this property, these 
  two paths share exactly the vertex~$t$. We merge these two paths at~$t$, 
  essentially reducing the number of paths by~1; see Fig.~\ref{fig:CasepLGamma}. We choose 
  the $s$-$t$-path from~$G_2$ as the $s$-$t$-path from~$G$. 
  
  \subcase{sc:p-LI} $G_1$ has a path cover of type~$L$ or~$\Gamma$ and~$G_2$
  has a path cover of type~$I_S$ or~$I_P$. This case works analogously to 
  Case~\ref{sc:p-IL}.
  
  \subcase{sc:p-LO} $G_1$ has a path cover of type~$L$ or~$\Gamma$ 
   of size at most $n_1/2+\rho_1$ and~$G_2$ has a
  path cover of type~$O$ of size at most $(n_2+1)/2+\rho_2$.
  
  We will consider the case that~$G_1$ has the $L$-configuration, the other 
  case works symmetrically. We obtain a path cover of type~$O$ for~$G$ with 
  $\rho=\rho_1+\rho_2$ braces of size 
  at most $n_1/2+\rho_1+(n_2+1)/2+\rho_2-1=(n+3)/2+\rho-1=(n+1)/2+\rho$ by 
  merging two paths: we take one $s$-$t$-path 
  from~$G_2$ and the path from~$G_1$ that starts in~$s$ and does not include~$t$. 
  As in previous cases, we merge these two paths at~$s$; 
  see Fig.~\ref{fig:CasepLO}. We choose the remaining $s$-$t$-path of~$G_2$ and the
  $s$-$t$-path of~$G_1$ as the two $s$-$t$-paths from~$G$.

  This covers all combinations of parallel compositions.
	
	For the run time, we first observe that storing at each node of the tree the type of path cover the corresponding subtree has only takes additionally constant time. We traverse the SPQ-tree bottom-up. Deciding which case to apply and (if needed) merging two paths  takes constant time. Thus, the algorithm runs in linear time.
\end{proof}

  \begin{table}[tb]
    \centering
    \caption{The subcases of Case~\ref{c:parallel}: a parallel composition. A row represents
      a path cover type of~$G_1$, a column represents a path cover type of~$G_2$.
      A table entry contains the resulting path cover type of~$G$ and the case
      that handles this combination. The table entry marked by a star means
      that we create an additional brace while handing the parallel composition.}
    \label{tab:parallel}
    \begin{tabular}{@{\quad}l@{\quad}|@{\quad}ll@{\quad}ll@{\quad}ll@{\quad}ll@{\quad}ll@{\quad}}
               & $I_S$                  && $L$                    && $\Gamma$               && $I_P$                  && $O$ \\
      \hline
      $I_S$    & $O$   & (\ref{sc:p-II}) & $I_P$ & (\ref{sc:p-IL}) & $I_P$ & (\ref{sc:p-IL}) & $O$   & (\ref{sc:p-II}) & $I_P$ * & (\ref{sc:p-IO}) \\
      $L$      & $I_P$ & (\ref{sc:p-LI}) & $I_P$   & (\ref{sc:p-IL})  & $I_P$ & (\ref{sc:p-IL}) & $I_P$ & (\ref{sc:p-LI}) & $O$     & (\ref{sc:p-LO}) \\
      $\Gamma$ & $I_P$ & (\ref{sc:p-LI}) & $I_P$ & (\ref{sc:p-IL}) & $I_P$   & (\ref{sc:p-IL}) & $I_P$ & (\ref{sc:p-LI}) & $O$     & (\ref{sc:p-LO}) \\
    \end{tabular}
  \end{table}

We can now use this lemma to show that any series-parallel graph~$G$ with~$n$
vertices admits a path cover of size at most~$\lceil n/2\rceil$.

\begin{theorem}\label{thm:seriesparallel}
  Any series-parallel graph~$G$ with~$n$
  vertices admits a path cover of size at most~$\lceil n/2\rceil$. The path cover can be computed in linear time.
\end{theorem}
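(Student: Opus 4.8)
The plan is to use Lemma~\ref{lem:sp-covertypes} as a black box and then eliminate the additive term~$\rho$ by removing braces one at a time, each removal saving exactly one path. Concretely, I first inspect the root of the SPQ-tree of~$G$: if it is a Q-node, the lemma yields a cover of type~$I_P$; if it is an S-node, a cover of a type in~$\Sigma$; and if it is a P-node, a cover of a type in~$\Pi$. In every case I obtain a path cover~\pc together with some number~$\rho$ of braces, and by Table~\ref{tab:pathtypes} the size of~\pc is at most~$B+\rho$, where $B\in\{(n-1)/2,\,n/2,\,(n+1)/2\}$ is the type-dependent bound. The goal is therefore only to transform~\pc into a cover of size at most~$B$: since the number of paths is an integer, $\lfloor(n+1)/2\rfloor=\lceil n/2\rceil$, and $\lfloor n/2\rfloor,\lfloor(n-1)/2\rfloor\le\lceil n/2\rceil$, any integer value at most~$B$ is automatically at most~$\lceil n/2\rceil$, which closes the argument. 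Hence the whole content of the theorem reduces to the claim that each of the~$\rho$ braces can be removed while decreasing the number of paths by one.

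Next I would make precise how to remove a single brace~$B$ between~$u$ and~$v$, consisting of the interior-disjoint paths~$P_1,P_2,P_3$, where~$P_3$ is the part between~$u$ and~$v$ of a longer path $P=X\,u\,P_3\,v\,Y$ (the tails~$X,Y$ possibly empty); see Fig.~\ref{fig:brace}b. I claim that the three path objects~$P,P_1,P_2$ cover the same edge set as two suitably rerouted simple paths. Pick an interior vertex~$\mu$ of one of~$P_1,P_2$ (at least one exists, since the graph is simple and~$P_1,P_2$ cannot both be the single edge~$uv$); say it lies on~$P_1$ and splits it into a $u$-part and a $v$-part. I then form (i)~the path $X\,u\,[P_2]\,v\,[\,v\text{-part of }P_1\,]\,\mu$ and (ii)~the path $\mu\,[\,u\text{-part of }P_1\,]\,u\,[P_3]\,v\,Y$. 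Each path visits~$u$ and~$v$ at most once, so both are simple; they are edge-disjoint because~$P_1$ is split at~$\mu$ and the remaining pieces come from pairwise interior-disjoint paths; and together they cover $E(P)\cup E(P_1)\cup E(P_2)$. Thus three path objects are replaced by two, decreasing the size of~\pc by one, and crucially~$P_3$ together with the tails~$X,Y$ survives untouched as a contiguous subpath of the second new path.

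The hard part is that braces may overlap. As noted after Case~\ref{sc:p-IO}, the two paths~$P_1,P_2$ of a brace are frozen at creation and never touched again, whereas~$P_3$ (an $s$-$t$-path) may keep growing as we move up the tree; consequently two braces can share a path, and such a shared path is always the~$P_3$ of the brace created earlier (Fig.~\ref{fig:brace}c). I would exploit the fact that the removal operation above consumes only the private, frozen paths~$P_1,P_2$ and leaves~$P_3$ intact: removing one brace therefore neither destroys nor modifies any path on which another brace relies, so the braces can be removed sequentially, each step saving one path (Fig.~\ref{fig:brace}d). The point that needs the most care is to verify that after each removal every still-pending brace remains a valid brace of the updated cover, i.e.\ that its three paths are still present and interior-disjoint, so that the individual savings compose to a total of~$\rho$. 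Once this is established, the final cover has at most $B\le\lceil n/2\rceil$ paths. For the running time, Lemma~\ref{lem:sp-covertypes} produces~\pc and all braces in linear time, and each brace removal is a local rerouting costing~$O(1)$, so the entire construction runs in~$O(n)$ time.
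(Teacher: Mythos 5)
Your overall architecture coincides with the paper's: invoke Lemma~\ref{lem:sp-covertypes}, note that the type-dependent bound is at most $\lceil n/2\rceil$ after rounding, and then cash in each brace for one saved path. Your local rerouting for a single isolated brace is essentially the operation of Fig.~\ref{fig:brace}b and is correct in isolation. The gap sits exactly where you flag \enquote{the point that needs the most care}: you assert that the removal \enquote{consumes only the private, frozen paths $P_1,P_2$ and leaves $P_3$ intact} and conclude that no pending brace is damaged, but this is false in two ways. First, your operation splits $P_1$ of $B$ at an \emph{arbitrary} interior vertex~$\mu$; if another brace $B'\lessdot B$ has its middle segment $P_3'$ lying on $P_1$ (which happens whenever $B'$ was created at a P-node inside the subgraph contributing $P_1$), then $\mu$ may fall strictly inside $P_3'$, and after the rerouting $P_3'$ is distributed over two different paths of the cover, so $B'$ is no longer a brace and its saving is lost. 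The paper avoids this by removing braces along a linear extension of the inclusion order and by choosing $\mu$ to be a \emph{split vertex}, i.e.\ an interior vertex of $P_1$ or $P_2$ that is interior to no included brace; proving that such a vertex always exists (take an endpoint $u'\neq u$ of an inclusion-minimal included brace) is a step your proposal does not contain.

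Second, parallel braces between the same pair $u,v$ can share a path: the $P_3$ of the earlier-created brace can be the full path $P_1'$ or $P_2'$ of a later one. Your removal of the earlier brace extends the path containing its $P_3$ past $u$ down to $\mu$, so that path no longer has endpoints $u$ and $v$ and can no longer play the role of $P_1'$ or $P_2'$ for the later brace; removing parallel braces one at a time with your operation therefore does not compose. The paper handles this by partitioning the braces into parallel classes, processing each class in creation order, and removing its braces in \emph{pairs} with a different rerouting (Fig.~\ref{fig:brace}c--d) that saves two paths per pair, falling back to the single-brace operation only for an odd leftover. Without the inclusion ordering, the split-vertex existence argument, and the pairwise treatment of parallel braces, the claim that the $\rho$ savings accumulate remains unproven, and that claim is the entire content of the theorem beyond Lemma~\ref{lem:sp-covertypes}.
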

\begin{proof}
  We use Lemma~\ref{lem:sp-covertypes} to obtain a path cover of~$G$ with~$\rho$
  braces of size at most~$(n+1)/2+\rho$. It remains to show that we can use
  the braces to reduce the total number of paths by~$\rho$.
  
  Note that the only operations we use in the proof of Lemma~\ref{lem:sp-covertypes}
  is merging existing paths or creating new paths, but we never split a path.
  This means that the internal structure of the braces remains untouched,
  that is, the paths~$P_1$,~$P_2$, and $P_3$ are not split in
  the resulting path cover. Further, after creating a brace, we never use the 
  paths~$P_1$ and~$P_2$ as an $s$-$t$-path for the series and parallel 
  composition, as we use the path~$P_3$ as the only designated $s$-$t$-path
  in the resulting path cover; see Case~\ref{sc:p-IO}. Hence, every 
  path~$P_1$ and~$P_2$ is also a path in the resulting path cover.
  Let~$B_1$ be a brace between vertices~$u_1$ and~$v_1$
  and let~$B_2$ be a brace between vertices~$u_2$ and~$v_2$.
  By the structure
  of series-parallel graphs, one of the following holds.
  \begin{enumerate}[label=(\roman*)]
    \item $B_1$ and $B_2$ are \emph{independent}, that is, $u_1$ and~$v_1$ do not
      lie in~$B_2$, and $u_2$ and~$v_2$ do not lie in~$B_1$;
    \item $B_1$ and $B_2$ are \emph{parallel}, that is, $u_1=u_2$ and $v_1=v_2$;
    \item $B_1$ \emph{is included in} $B_2$ (or vice versa), that is,~$u_1$ and~$v_1$ lie
      in~$B_2$, but~$u_2$ and/or~$v_2$ do not lie in~$B_1$; or
    \item $B_1$ and $B_2$ are \emph{consecutive}, that is, $u_2=v_1$ or $u_1=v_2$.
  \end{enumerate}
  
  If $B_1$ is included in $B_2$, then we write $B_1 \lessdot B_2$.
  The parallel-relation of the braces forms an equivalence relation.
  We denote the corresponding equivalence classes by~$S_1,\ldots,S_k$.
  The sets are ordered according to the (linear extension of the) 
  partial order as induced by the included-relation.
  In particular, for all $1\le i<j\le k$, we cannot find any $B\in S_i$ and  $B'\in S_j$ 
  such that $B \lessdot B'$.
  
 \begin{figure}[tb]
      \centering    
  \begin{subfigure}[c]{0.3\textwidth}
      \centering
      \includegraphics[page=3]{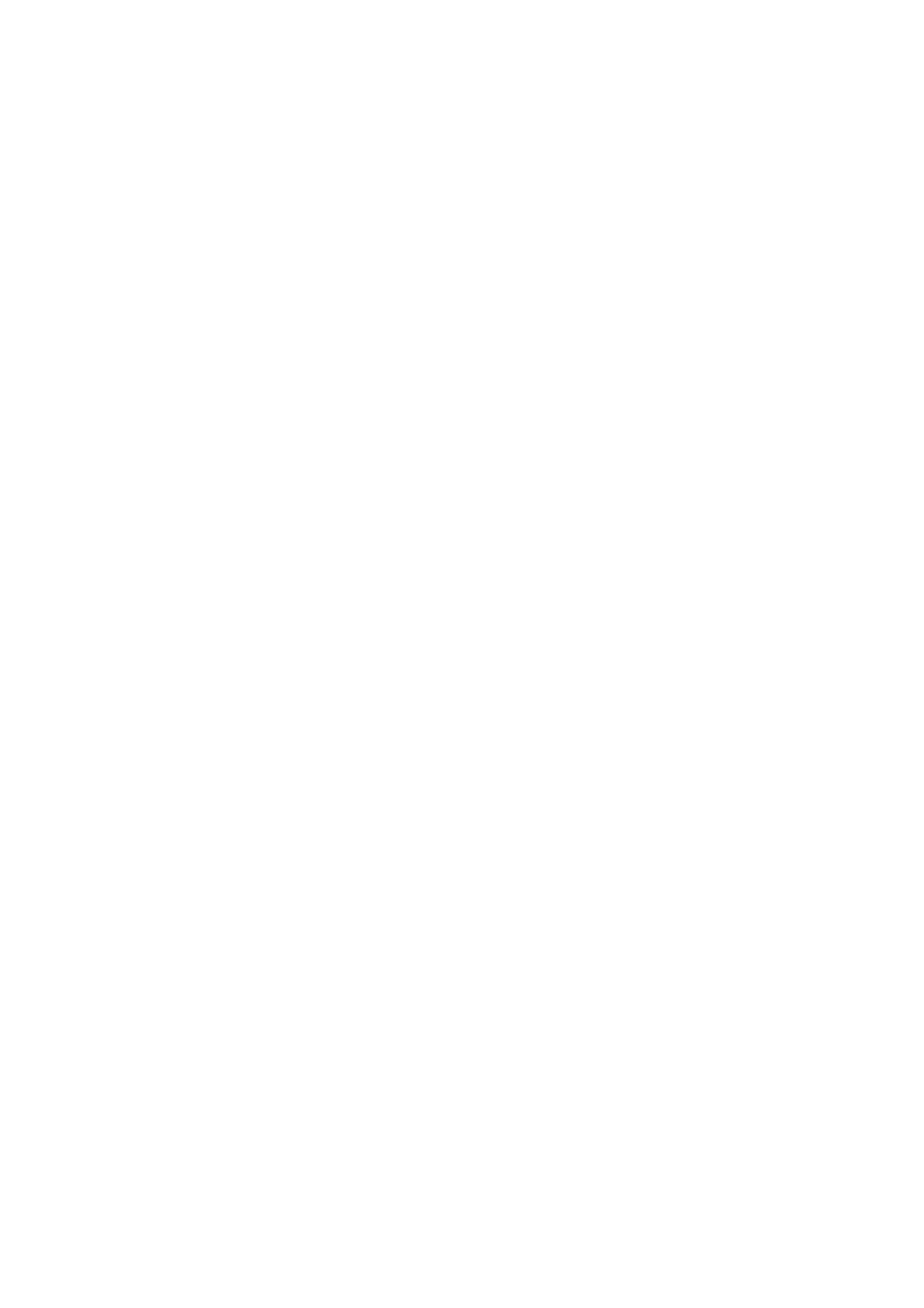}
      \subcaption{Case~\ref{sc:p-II}: $I_S + I_P \rightarrow O$}
      \label{fig:CasepII}
    \end{subfigure}
    \hfil
    \begin{subfigure}[c]{0.3\textwidth}
      \centering
      \includegraphics[page=3]{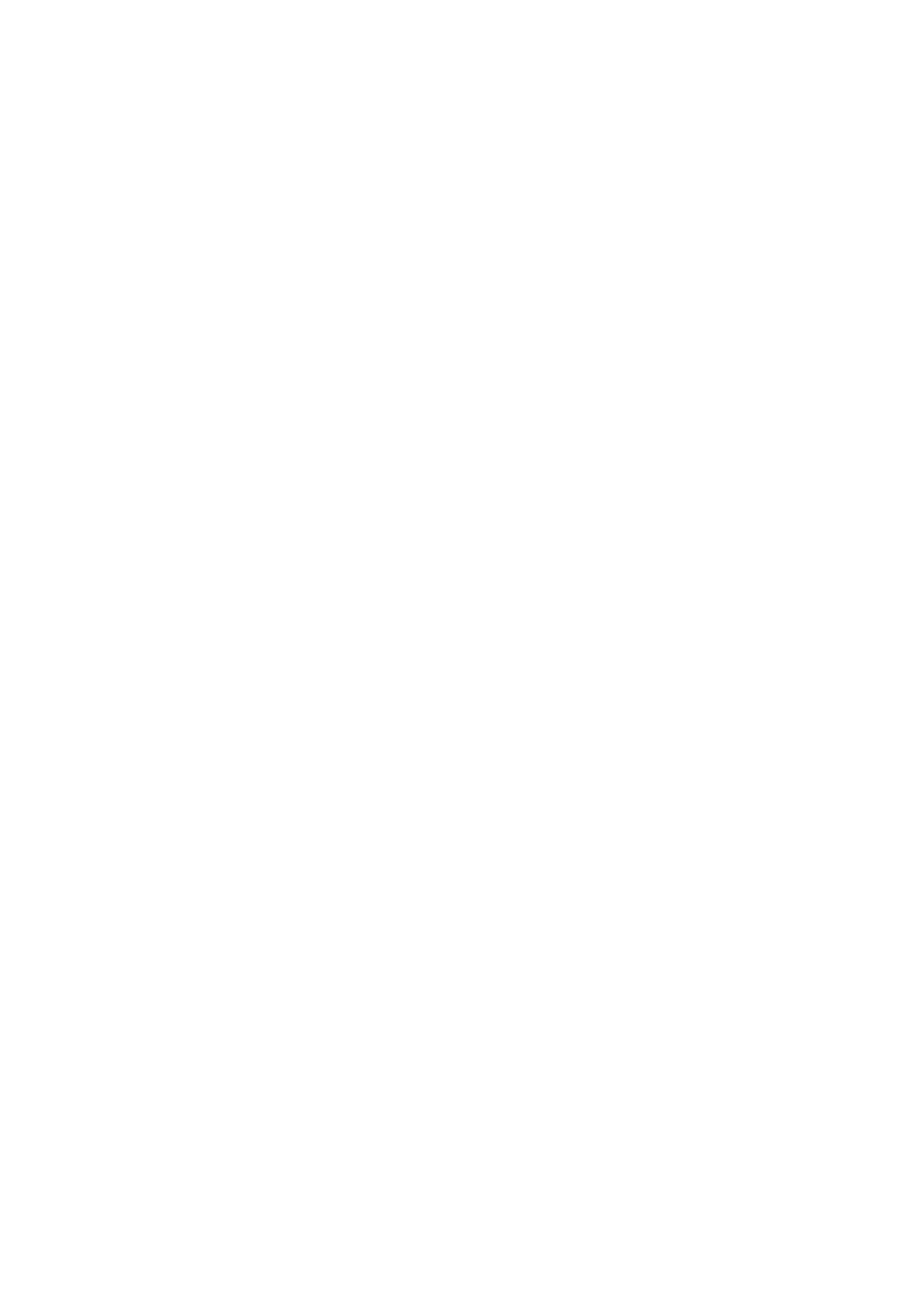}
      \subcaption{Case~\ref{sc:p-IO}: $I_S + O \rightarrow I_P$}
      \label{fig:CasepIO}
    \end{subfigure}
    \\[5pt]
    \begin{subfigure}[c]{0.3\textwidth}
      \centering
      \includegraphics[page=4]{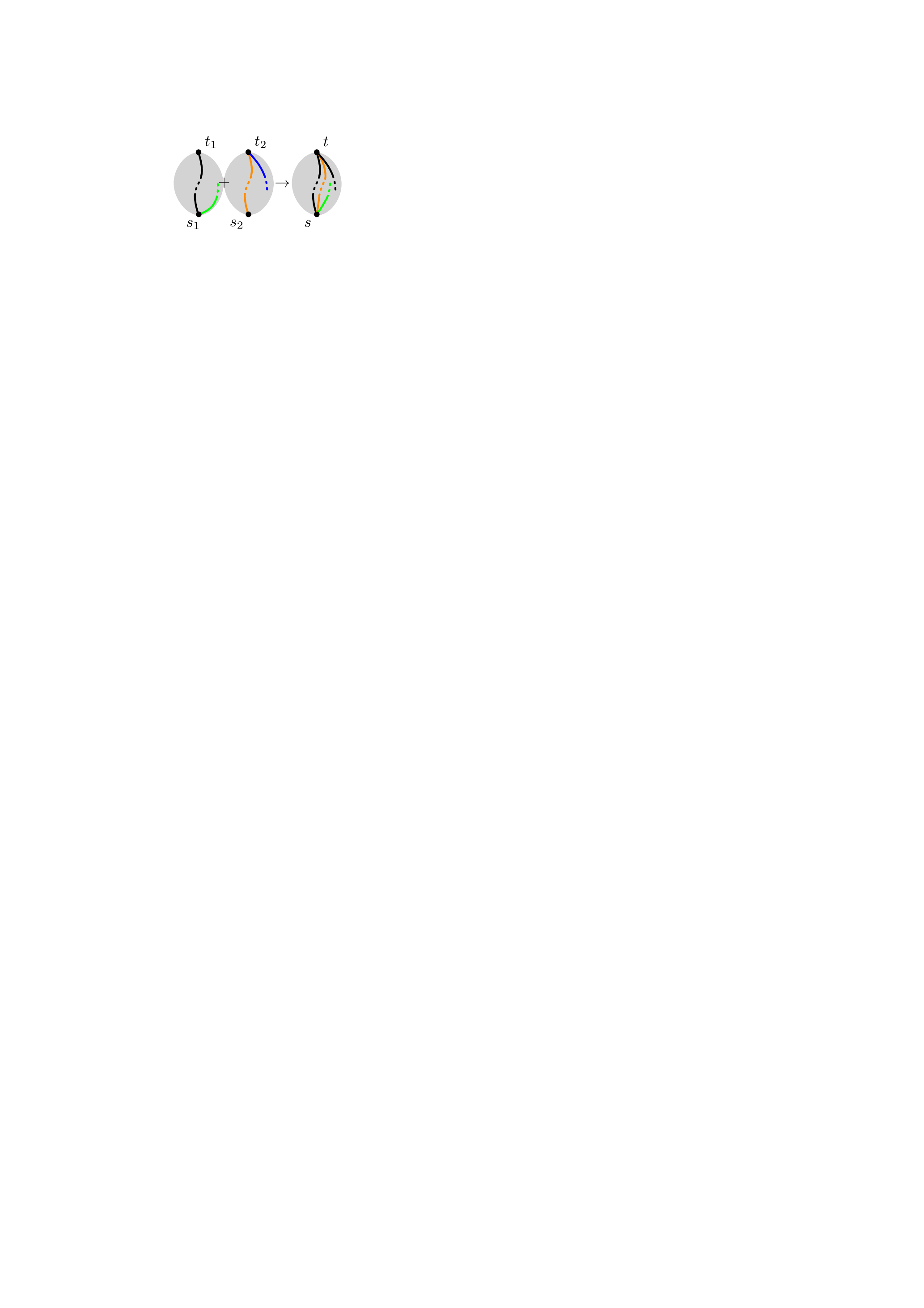}
      \subcaption{Case~\ref{sc:p-IL}: $L + \Gamma \rightarrow I_P$}
      \label{fig:CasepLGamma}
    \end{subfigure}
     \hfil
    \begin{subfigure}[c]{0.3\textwidth}
      \centering
      \includegraphics[page=4]{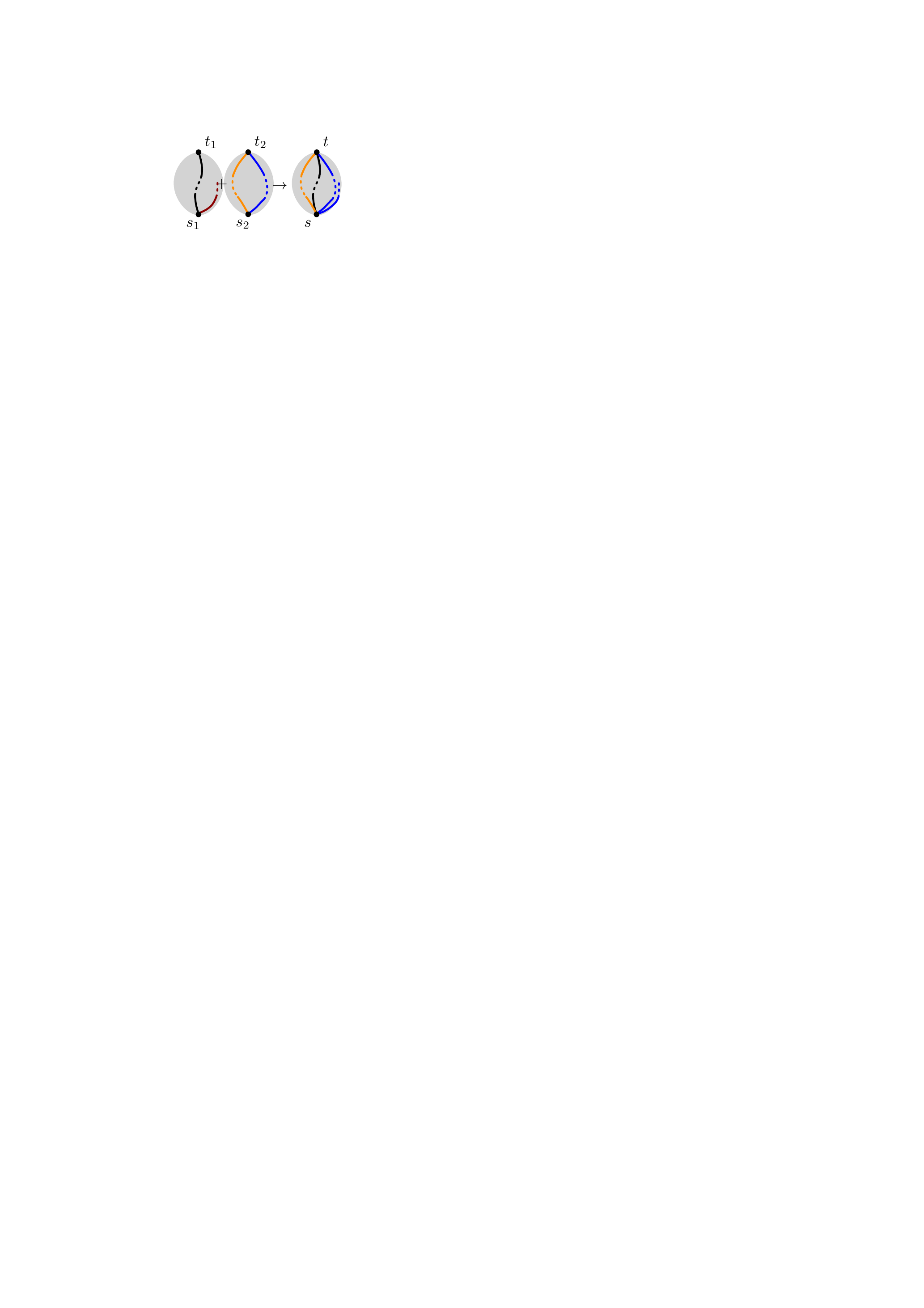}
      \subcaption{Case~\ref{sc:p-LO}: $L + O \rightarrow O$}
      \label{fig:CasepLO}
    \end{subfigure}
    \caption{Illustration for the subcases of Case~\ref{c:parallel}: a parallel composition.}
    \label{fig:Casesparallel}
  \end{figure}

  Let~$B$ be a brace between vertices~$u$ and~$v$ with paths~$P_1$, $P_2$, 
  and~$P_3$. We denote with~$\mathcal{B}$ the set of braces that are included in~$B$
  and contain at least one interior vertex of~$P_1$ or~$P_2$. A
  \emph{split vertex} of~$B$ is a vertex in the interior of~$P_1$ or~$P_2$ 
  that is not an interior vertex in any brace of~$\mathcal{B}$. We show
  that $B$ always has a split vertex. If $\mathcal{B}=\emptyset$,
  then there has to be a split vertex, since otherwise~$P_1$ and $P_2$ would
  form a multiple edge. Otherwise, take a brace~$B'\lessdot B$ for
  which there is no brace~$B''$ with $B' \lessdot B'' \lessdot B$. Assume that~$B'$
  lies between~$u'$ and~$v'$. Since $B'\lessdot B$, we know that $B'$ and $B$
  are not parallel. In particular, $u=u'$ and $v=v'$ is impossible. W.l.o.g., we
  can assume that $u\neq u'$. In this case,~$u'$ is a split vertex since it is by definition contained
  in $P_1$ or $P_2$ and by construction it is not an interior vertex of  
  $B'$; therefore, it is also not an interior vertex in any brace of~$\mathcal{B}$.
  We call the path
  of $P_1$ and $P_2$ that contains the split vertex the \emph{split path}
  and the other one the \emph{unsplit path}.
  
  We remove the sets of braces in order $S_1,\ldots,S_k$, starting with $S_1$. Assume that
  we already removed the  braces in 	$S_1,\ldots,S_{i-1}$. 
  If~$|S_i|>1$, we first sort the braces in $S_i$ in their creation order starting with the brace that was created first. 
  We remove the braces according to this order. 
  Let~$B$ and~$B'$ be the next two braces to remove from~$S_i$ between vertices~$u$ and~$v$.
  Furthermore, let $s$ be a split vertex in $B$ and let $s'$ be a split vertex in $B'$. 
  The paths of~$B$ and the paths of~$B'$ are either pairwise interior-vertex-disjoint,
  or---since~$B$ was created before~$B'$---the path~$P_3$ of~$B$ is equivalent
  to one of the paths of~$B'$. However, the paths~$P_1$ and~$P_2$ of~$B$ are 
  interior-vertex-disjoint from the paths of~$B'$ as we kept $P_3$ as the
  only designated $s$-$t$-path in the resulting path cover, so we have $s\neq s'$.
  We now delete the split
  paths and the unsplit paths of~$B$ and~$B'$ from the path cover. Then, we create a new path
  that starts in~$s$, follows the split path of~$B$ 
  until~$v$, follows then the unsplit path of~$B'$ until~$u$, and continues with the
  split path of~$B'$ until~$s'$. We create a second new path that starts in~$s$,
  follows the split path of~$B$ until~$u$, follows then the unsplit path of~$B$
  until~$v$, and continues with the split path of~$B'$ until~$s'$; see Fig.~\ref{fig:brace}c--d.
  We then remove~$B$ and~$B'$ from~$S_i$.
  Note that the paths we are changing are of \enquote{type} $P_1$ or $P_2$ 
  in either $B$ or $B'$. Hence, they all have $u$ and~$v$
  as endpoints and we do not create new paths by removing them.
  In the end, we removed two braces from the
  path cover and reduced its number of paths by~2. We repeat this step 
  until~$|S_i|\le 1$. 
  
  If~$|S_i|=1$, let~$B$ be the only brace in~$S_i$ that lies between~$u$ and~$v$ and that has  
  split vertex~$s$. 
  Note that even if we have removed other braces from $S_i$ beforehand and $B$ is the last brace in this set,  the paths $P_1$, $P_2$,    and $P_3$ of $B$ remained untouched so far. This is ensured by resolving the braces in creation order.
  We now remove
  both the split path and the unsplit path of~$B$ from the path cover. Then,
  we split the path~$P_3$ from~$B$ at~$u$ into two paths~$P$ and~$P'$ such 
  that~$v$ lies on~$P'$. We then extend~$P'$ along the split path of~$B$ until~$s$
  and~$P$ along the unsplit path of~$B$ until~$v$, and then along the split path
  until~$s$; see Fig.~\ref{fig:brace}b.  
  By this, we removed one brace and reduced the number of paths by~1.

  With this method, we can remove all braces in~$S_i$ from the 
  path cover and reduce its number of paths by~$|S_i|$.
  At the end of this procedure, we have removed all~$\rho=\sum_{i=1}^k |S_i|$ braces and~$\rho$
  paths from the path cover. Thus, we obtain a path cover of size at 
  most~$(n+1)/2\le \lceil n/2\rceil$.

	It remains to prove the runtime. First we use Lemma~\ref{lem:sp-covertypes} to obtain in linear time a path cover of~$G$ with~$\rho$ braces. 
 On the fly we mark all nodes of the tree that create braces. 
 Parallel braces lie in the same P-component, thus it is easy to find the equivalent classes of the braces. 
 If $B_1$ is included in $B_2$ then the node creating $B_1$ lies in the subtree of the node 
 creating $B_2$ (the converse statement does not hold).  
Thus, in order to remove the sets of braces in the correct order, we simply have to traverse the tree top-down. 
To ensure that parallel braces are removed in creation order, we have to remove the braces in the same P-component bottom-up. 
Since every P-component is a path the removal order of the braces can be computed in linear time.

When removing the braces, we have to find the corresponding split vertex. Thus, we have to find a vertex for every 
brace $B$ that is not an interior vertex of a brace $B'$ with $B'\lessdot B$ efficiently. 
This information can be precomputed as follows.
We store in every interior node~$x$ a pointer to a \emph{potential split vertex}~$z$,
that is, an interior vertex on its $s$-$t$-path with the following property:
If $x$ is an S-node, then $z$ lies currently in no brace. If~$x$ is a P-node,
then any brace that currently contains~$z$ is a brace between~$s$ and~$t$.
Further, we store for every brace a pointer to a split vertex.

We now go through the tree bottom-up and compute all potential split vertices and split vertices.
If a node is an S-node, we select the combined $t_1=s_2$~vertex of its children
as potential split vertex. By construction this vertex cannot lie in any brace at this point.
If it is a P-node we do the following. Let $z_1$ be the potential split vertex of the left child
and $z_2$ be the the potential split vertex of the right child. 
The $s$-$t$-path of the left child is always an $s$-$t$-path of the P-node. Hence 
we can pick~$z_1$ as the potential split vertex for the P-node. If the parallel operation 
introduces a brace we store~$z_2$ as split vertex of the brace. 
Braces will only be introduced in Case~\ref{sc:p-IO} and the two $s$-$t$-paths
from the $O$-configuration are chosen as the paths~$P_1$ and~$P_2$.
Hence~$z_2$ lies on~$P_1$ or $P_2$ and splits the corresponding path appropriately.
This concludes the proof.
\end{proof}

\section{Planar 3-Trees}
\label{sec:3trees}
A  graph is a \emph{planar 3-tree}, also known as an \emph{Apollonian network}, 
if it can be constructed from the $K_3$
and a sequence of \emph{stacking operations}.
A stacking operation adds a vertex $v$ to the graph by selecting an interior triangular face $abc$
and introducing the edges $va$, $vb$ and $vc$. The graph loses the face $abc$ but wins the
faces $vab$, $vac$, $vbc$; see Fig.~\ref{fig:planar3tree}. 
We assume from now on that~$G$ is a planar 3-tree with~$n$ vertices.
The graph~$G$ can be associated  
with an ordered rooted ternary tree~$T_G$ as follows. 
The interior vertices of~$G$ are in 1--1 correspondence to the interior nodes of~$T_G$,
and the interior faces of~$G$ are in 1--1 correspondence with the leaves of~$T_G$.
If $G$ is the $K_3$, then $T_G$ has a single node, which is a leaf.
When stacking a vertex into some face~$f$, we attach three children to
the leaf that was associated with~$f$ (and relabel the former leaf with $f$). We also choose an appropriate convention 
for the order of the leaves that allows us to identify faces and leaves. 
The tree $T_G$ is obtained by carrying
out all of $G$'s stacking operations this way. 
Note that this tree can be labeled
such that it gives a tree decomposition of~$G$ with bags of size~4. Therefore, planar 3-trees have treewidth~3.

\begin{figure}[t]
  \subcaptionbox{\label{fig:planar3tree}}{\includegraphics[scale=.8,page=1]{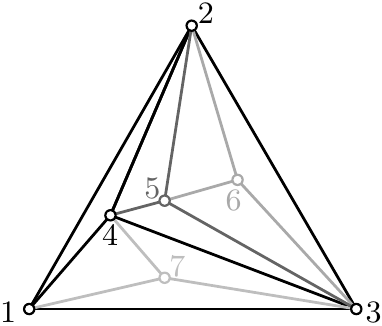}}
  \hfil
  \subcaptionbox{\label{fig:treedecomposition}}{\includegraphics[scale=.9,page=2]{tree_decomposition}}
  \hfil
  \subcaptionbox{\label{fig:stackingtree}}{\includegraphics[scale=.9, page=3]{tree_decomposition}}
  \hfil
  \subcaptionbox{\label{fig:grouping}}{\includegraphics[scale=.6]{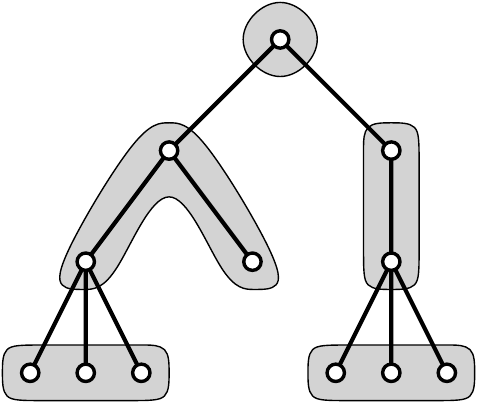}}
  \caption{(a) A planar 3-tree~$G$, (b) the tree $T_G$, and (c) the stacking tree of~$G$. A group partition
  of a different stacking tree (d).}
  \label{}
\end{figure}

The \emph{stacking tree} of $G$ is obtained by deleting all leaves in $T_G$;
see Fig.~\ref{fig:stackingtree}. We maintain the information
which vertex got stacked in which (temporary) face by an auxiliary structure.
By this, we can reconstruct $G$ from its stacking tree.
Next, we construct a partition (called \emph{group partition}) of the~$n-3$ nodes of the stacking tree.
We refer to the sets in a group partition as \emph{groups}. Each group (with one exception) will be of one of the following types.
\begin{enumerate}[label=(\Roman*)]
  \item A group of type I contains a node with one child.
  \item A group of type II contains a node with two children.
  \item A group of type III contains three siblings (but not their parent).
\end{enumerate}

We can construct such a partition by iteratively processing an arbitrary
deepest leaf~$v$ in the stacking tree (see Fig.~\ref{fig:grouping} for an example). If~$v$ has no sibling, then we group~$v$ and its parent to a type~I group; if $v$ has one sibling, then we group~$v$, its
sibling, and its parent to a type~II group; and if $v$ has two siblings,
we group~$v$ and its two siblings to a type~III group. Then, we remove all
vertices of the created group from the stacking tree and repeat. We stop when
either each vertex of the stacking tree is grouped, or only the root 
(corresponding to the first stacked vertex) remains. In the former case,
we create another group that is empty, and in the latter case, we 
create another group that contains the root only. Let~$g_1,\ldots,g_k$ be the groups of the partition in reverse order, such that $g_k$
was created first. The planar 3-tree
associated with the stacking tree obtained by 
the groups $g_1,\ldots,g_i$ , $1\le i\le k$, is named $G_i$.
If~$g_1$ is empty, we set~$G_1=K_3$.
We denote by $|G_i|$ the number of vertices in~$G_i$, and by~$|g_i|$ the number
of nodes in~$g_i$.

We will now create a path cover by iteratively adding the groups to the 
stacking tree. Hereby we make use of the following observation, which
is due to the fact that every leaf in the stacking tree is 
in correspondence with a degree~3 vertex in the planar 3-tree.

\begin{observation}\label{obs:leafdegree3}
  In any path cover of a planar 3-tree, for every leaf~$v$ in its corresponding
  stacking tree, there is a path with an endpoint in~$v$.
\end{observation}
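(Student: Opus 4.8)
The plan is to prove Observation~\ref{obs:leafdegree3} by translating the structural fact into a counting argument at a single vertex. First I would recall the precise correspondence: every leaf~$v$ of the stacking tree is an interior vertex of~$G$ that was stacked into a triangular face and never subsequently had any vertex stacked into one of its three incident faces. Consequently~$v$ is adjacent to exactly the three vertices~$a,b,c$ of the face into which it was stacked, so its degree in~$G$ is exactly~$3$. This is the key fact the observation rests on, and it is already asserted in the sentence preceding the statement, so I may simply invoke it.

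Second, I would argue directly about what a path cover must do at a degree-$3$ vertex. Let~\pc be any path cover of~$G$. Each of the three edges incident to~$v$ lies in exactly one path of~\pc. A simple path uses a vertex~$v$ either as an interior vertex, in which case it contributes exactly two of the edges incident to~$v$, or as an endpoint, in which case it contributes exactly one incident edge. I would therefore count the total number of edge-endpoints at~$v$ contributed by all paths passing through~$v$: this total equals~$\deg(v)=3$, an odd number. If no path of~\pc had an endpoint at~$v$, then every path through~$v$ would pass through it as an interior vertex and contribute an even number~(namely~$2$) of incident edges, making the total even. Since~$3$ is odd, at least one path must have an endpoint at~$v$.

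The main step is thus a parity argument: an odd-degree vertex must be the endpoint of at least one path in any path cover, and a degree-$3$ vertex is odd. This is exactly the elementary principle already used in the introduction (``in every odd-degree vertex, at least one path has to have an endpoint''), so the only work is to confirm that leaves of the stacking tree are indeed degree-$3$ vertices. I do not anticipate a genuine obstacle here; the statement is essentially a bookkeeping consequence of the stacking-tree construction combined with the standard odd-degree observation. The one point requiring a little care is ensuring that the claimed correspondence---leaf of stacking tree $\leftrightarrow$ degree-$3$ interior vertex---is stated cleanly, since the stacking tree is obtained from~$T_G$ by deleting leaves, and I would want to verify that a node with no children in the stacking tree corresponds to a vertex none of whose incident faces was ever stacked into.
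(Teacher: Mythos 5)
Your proof is correct and matches the paper's (implicit) justification exactly: the paper states the observation follows from the fact that every leaf of the stacking tree corresponds to a degree-$3$ vertex, combined with the standard parity fact from the introduction that every odd-degree vertex must be an endpoint of some path in any path cover. You have simply written out in full the two steps the paper leaves to the reader.
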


Next, we show how to obtain a path cover for~$G$.

\begin{lemma}\label{lem:addgroup}
  Let~$G$ be a planar 3-tree, and let~$\alpha$, $\beta$, and~$\gamma$ be the number of type~I, type~II, and 
  type~III groups in some group partition.
  We can construct a path cover for~$G$ of size at most 
  $\alpha+2\beta+\gamma+2$ in linear time.
\end{lemma}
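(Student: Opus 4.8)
The plan is to build the path cover incrementally by processing the groups $g_1,\ldots,g_k$ in order, maintaining after each step~$i$ a valid path cover of~$G_i$ whose size is bounded by the contributions of the groups processed so far. Concretely, I would prove by induction on~$i$ that $G_i$ admits a path cover whose size is at most $\alpha_i + 2\beta_i + \gamma_i + 2$, where $\alpha_i,\beta_i,\gamma_i$ count the type~I, II, III groups among $g_1,\ldots,g_i$. The base case is $G_1$: either $g_1$ is empty and $G_1=K_3$, which needs two paths (matching the additive constant~$+2$), or $g_1$ contains the root only, for which a small direct construction gives a cover of the appropriate size. The inductive step is where the real work lies: assuming a path cover of $G_{i-1}$, I would show how to absorb the single new group $g_i$ into the existing cover so that the number of paths increases by exactly~$1$ for a type~I group, by~$2$ for a type~II group, and by~$1$ for a type~III group, matching the coefficients $\alpha+2\beta+\gamma$.

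The key to the inductive step is to understand what stacking the vertices of $g_i$ does to the graph and to exploit the freedom in how the three new stacking edges $va,vb,vc$ are distributed among the paths. When we add a node $v$ of the stacking tree, we introduce a new degree-3 vertex and three new edges into an existing triangular face; the three incident edges must be covered, and Observation~\ref{obs:leafdegree3} forces at least one path to end at~$v$. I would treat each group type separately: for a type~I group (a node with a single child, i.e., two stacked vertices forming a small chain inside a face) I expect to be able to route the new edges so that the path count rises by one; for a type~III group (three siblings stacked into three sub-faces of a common parent face, with the parent vertex already present) the three new vertices can be chained together through their common neighbor, again costing only one extra path; and for a type~II group (a node with two children) the extra cost is~$2$. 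In each case the local picture is a constant-size configuration of triangles, so the routing is a finite case analysis on how the already-present paths pass through the boundary vertices of the affected face.

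The main obstacle will be verifying that the local re-routing at each step is always compatible with the paths inherited from~$G_{i-1}$: I must ensure that when I extend or merge existing paths to cover the new edges, I never accidentally create a cycle, never reuse an edge, and never break the simplicity of a path. The difficulty is that the boundary vertices of the face into which $g_i$ is stacked may already be endpoints or interior vertices of several inherited paths, and the allowed manipulations depend on exactly which of these situations occurs. I would therefore phrase the inductive hypothesis carefully enough to record the relevant local state at the face being subdivided (for instance, which boundary edges are path-ends), so that the case analysis for each group type has enough information to proceed. Once the per-group increments are established, summing over $i=1,\ldots,k$ yields the bound $\alpha+2\beta+\gamma+2$, and since every manipulation is a constant-time local operation and there are $O(n)$ groups, the total running time is linear.
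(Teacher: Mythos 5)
Your framework coincides with the paper's: induct over the groups $g_1,\dots,g_k$, start from a $2$-path cover of $K_3$ (or $K_4$ if $g_1$ is the root), and show that absorbing a group costs $+1$ path for types I and III and $+2$ for type II. But the entire content of the lemma lies in those three local constructions, and you have not supplied them---you write that you ``expect to be able to route the new edges'' and defer the case analysis. The type~III case in particular does not follow from generic local re-routing: stacking three siblings adds nine edges and three new odd-degree vertices, each of which must carry a path endpoint, so a single additional path (two endpoints) cannot account for them on its own. The construction only closes because the common parent~$q$ of the three siblings is a degree-$3$ vertex of~$G_i$, hence by Observation~\ref{obs:leafdegree3} some path of $\mathcal{P}_i$ already ends at~$q$; one detaches the last edge $e=(r,q)$ of that path, re-routes the freed end through one new vertex, across~$q$, and out to a second new vertex (thereby spending the spare endpoint at~$q$), re-routes a second existing path through the third new vertex, and covers $e$, the other detached edge $e'$, and the leftover edges with a single new path. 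You invoke Observation~\ref{obs:leafdegree3} only for the freshly stacked vertices, not for the parent, so the key resource that makes the $+1$ bound for type~III achievable is absent from your plan.

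Your proposed remedy---strengthening the inductive hypothesis to record ``local state'' at the face being subdivided---is a further sign that the constructions have not been found, because no such strengthening is needed. All three operations are of the form ``replace an edge $(a,b)$ of an existing path by an $a$--$b$ subpath whose interior vertices are freshly stacked,'' possibly combined with extending a path at one of its endpoints into a fresh vertex; since the new vertices lie on no pre-existing path, simplicity is automatic and no cycles can arise, so the inherited cover needs no extra bookkeeping. Until you write down the three routings explicitly (and, for type~II, note that the path added in the type~I substep has an endpoint on every face created by the first two stackings, so it can be extended to reach the third vertex at no extra cost beyond one further path), the argument remains a plan rather than a proof.
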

\begin{proof}
  If the first group is empty, we can easily find a path cover of 
  $G_1=K_3$ of size~2. Otherwise, the first
  group contains exactly the root of the stacking tree. It is an easy
exercise to find a path 
  cover of $G_1=K_4$ of size~2. We continue with
  adding the groups $g_2,\ldots,g_k$ in order one by one. 
  Let~$\pc_i$ be a path cover of~$G_i$, $1\le i<k$, of size~$p_i$.
  We now show how to get a path cover~$\pc_{i+1}$ of~$G_{i+1}$ of size
  $p_{i+1}$ such that~$p_{i+1}=p_i+1$ if~$g_{i+1}$ is of type~I or~III,
  and~$p_{i+1}=p_i+2$ if~$g_{i+1}$ is of type~II, which proves the bound in the 
  statement of the lemma.
  
  \setcounter{casecounter}{0}
  \ccase{c:typeI} The group~$g_{i+1}=\{u,v\}$ is of type~I. Assume that $v$ is a child of~$u$. 
  We take one path from~$\pc_i$ that contains an edge~$e$ that is incident to a face that contains~$u$ but not~$v$.
  We substitute~$e$ from this path by a subpath that visits 
  the two new vertices. Then, we add a path that covers~$e$ and the remaining
  three added edges as shown in Fig.~\ref{fig:TypeI}. This adds~1 new path, so we have
  $p_{i+1}=p_i+1$.
  
  \ccase{c:typeII} The group~$g_{i+1}=\{u,v,w\}$ is of type~II. We first mimic the procedure 
  of Case~\ref{c:typeI} and add 2 of the new vertices including the additional path~$P_I$. Let~$w$ be the 
  remaining vertex.
  Since every face created by stacking~$u$ and~$v$ is incident to an endpoint of~$P_I$, 
  we can extend~$P_I$ (after stacking~$w$) to~$w$. We add then a second
  new path for the other~2 edges incident to~$w$; see Fig.~\ref{fig:TypeII}. 
  This adds 2 new paths, so we have $p_{i+1}=p_i+2$.
  
  \ccase{c:typeIII} The group~$g_{i+1}=\{u,v,w\}$ is of type~III.
  The parent~$q$ of the three new vertices is a degree~3 node in~$G_i$, so some 
  path~$P$ of~$\pc_i$ ends in it. We remove the last edge~$e=(r,q)$ from~$P$.
  Two of the new vertices, say~$u$ and~$v$, share a face with~$e$.
  We extend~$P$ starting from~$r$ through~$u$, then through~$q$, and then
  to~$w$. Then, we take the path~$P'$
  from~$\pc_i$ that contains the edge~$e'=(r',q)$ that shares a face with~$w$ and~$v$. 
  We replace~$e'$ in~$P'$ by the edges
  $(r',v)$ and $(v,q)$. Finally, we cover~$e$, $e'$, and the remaining added
  edges by a single new path; see Fig.~\ref{fig:TypeIII}. This adds 1 new path,
  so we have $p_{i+1}=p_i+1$.
  
  Since the first group is covered by~2 paths, this proves the lemma.
  
  For the runtime, observe that   adding a group only takes constant time.
  Thus, it is bounded by creating the sequence of groups. To this end, we have
  to find the deepest leaf in every step. We can sort all nodes by their depth
  in linear time with e.g. Counting Sort, since the depth of the nodes is
  bounded by~$n$.   
  The stacking tree itself can be easily obtained in linear time from $T_G$ where computing  $T_G$ also takes  linear time \cite{Bodlaender96}.
  This concludes the proof.
    
  \begin{figure}[tb]
    \begin{subfigure}[c]{0.3\textwidth}
      \centering
      \includegraphics[]{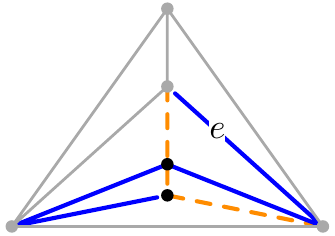}
      \subcaption{Type I}
      \label{fig:TypeI}
    \end{subfigure}
    \hfill
    \begin{subfigure}[c]{0.3\textwidth}
      \centering
      \includegraphics[]{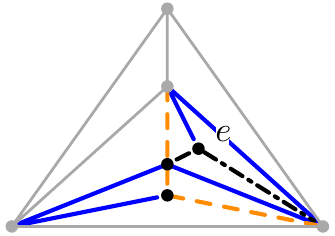}
      \subcaption{Type II}
      \label{fig:TypeII}
    \end{subfigure}  
    \hfill  
    \begin{subfigure}[c]{0.3\textwidth}
      \centering
      \includegraphics[]{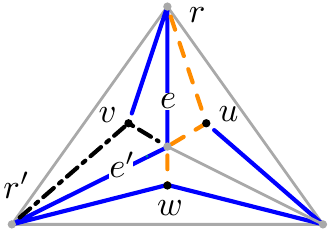}
      \subcaption{Type III}
      \label{fig:TypeIII}
    \end{subfigure}
    \caption{Illustration of the newly added group in Lemma~\ref{lem:addgroup}.}
    \label{fig:Types}
  \end{figure}  
\end{proof}
                                                                                 
A \emph{full planar 3-tree} is a planar 3-tree whose stacking tree is a
proper ternary tree, that is, there are no degree~2 nodes in the
stacking tree. In this case, 
the group partition uses only groups of type~III (except~$g_1$).
Hence, we have~$\alpha=\beta=0$ and~$\gamma=k-1$. Since the root cannot be
involved in a group of type~III,~$g_1$ will contain the root. Hence, $|g_1|=1$
and we have $n-3=\sum_{i=1}^k |g_i|=1+3(k-1)=3k-2$ and therefore $n=3k+1$. 
Thus, we can  create a path cover with at most $\gamma+2=k+1=(3k+3)/3 = \lceil n/3\rceil$
groups of type~III, leading us to the following proposition.

\begin{proposition}
  Any full planar 3-tree admits a path cover of size at 
  most~$\lceil n/3\rceil$. 
\end{proposition}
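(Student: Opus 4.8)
The plan is to invoke Lemma~\ref{lem:addgroup} and then account for which group types a full planar 3-tree actually produces. The first step is therefore to pin down the structure of the group partition when the stacking tree is a proper ternary tree. I would argue that the deepest-leaf grouping procedure then creates exclusively type~III groups, apart from the final singleton group~$g_1$ containing the root. The reason is that in a proper ternary tree every internal node has exactly three children; hence if $v$ is a deepest leaf, its parent has two further children, and neither of them can be an internal node, since otherwise that subtree would contain a leaf strictly deeper than~$v$, contradicting the choice of~$v$. Thus $v$ always comes with exactly two leaf siblings, so the procedure always forms a type~III group. Removing such a group turns the common parent into a leaf and leaves the remaining tree proper, so this invariant is maintained all the way up until only the root is left, which then forms~$g_1$.

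Having established $\alpha=\beta=0$ and $\gamma=k-1$, I would next relate $n$ and $k$. Since $g_1$ is the singleton root group and every other group has size~$3$, the $n-3$ non-root nodes of the stacking tree satisfy $n-3 = 1 + 3(k-1) = 3k-2$, giving $n = 3k+1$. Substituting $\alpha=\beta=0$ and $\gamma=k-1$ into Lemma~\ref{lem:addgroup} yields a path cover of size at most $\gamma+2 = k+1$. Finally I would close with the arithmetic $k+1 = (3k+3)/3 = (n+2)/3 = \lceil n/3\rceil$, where the last equality uses $n=3k+1$. The linear running time is inherited directly from Lemma~\ref{lem:addgroup}.

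I expect the only genuinely delicate point to be the structural claim that the grouping yields solely type~III groups; once that is secured, the remainder is pure bookkeeping. In particular I would double-check the behaviour of the grouping routine near the root, confirming that the root ends up alone in $g_1$ rather than being absorbed into a type~III group, since the clean identity $n=3k+1$ and the final ceiling both hinge on $|g_1|=1$.
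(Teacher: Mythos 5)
Your proposal is correct and follows essentially the same route as the paper: show the group partition of a proper ternary stacking tree consists only of type~III groups plus the singleton root group, deduce $n=3k+1$, and apply Lemma~\ref{lem:addgroup}. The only difference is that you explicitly justify the deepest-leaf/sibling structure and the preservation of properness after each removal, which the paper merely asserts; this is a welcome bit of extra rigor, not a deviation.
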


A planar 3-tree is called \emph{serpentine} if each stacking operation takes 
place on one of the three faces that were just created. The stacking tree of such a graph is a path.
Here, the group partition gives only groups of type~I
(except~$g_1$), which yields $\beta=\gamma=0$ and $\alpha=k-1$.
We have $n-3=\sum_{i=1}^k |g_i|=|g_1|+2(k-1)\ge 2k-2$. Thus, we have $n\ge 2k+1$ and we can 
create a path cover with at most $\alpha+2=k+1\le \lceil n/2\rceil$
groups of type~I, leading us to the following proposition.

\begin{proposition}
  Any serpentine planar 3-tree admits a path cover of size at most 
  $\lceil n/2\rceil$. 
\end{proposition}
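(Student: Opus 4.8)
The plan is to formalize the structural observation that for a serpentine graph the stacking tree degenerates into a path, and then to feed the resulting group counts into Lemma~\ref{lem:addgroup}. First I would verify that the stacking tree is indeed a path: starting from the $K_3$, the first stacked vertex becomes the root, and the serpentine condition forces each subsequent vertex to be stacked into one of the three faces created by its immediate predecessor, so in $T_G$ each new interior node is attached to a leaf that was just produced by the previous stacking operation. Consequently, in the stacking tree each node has at most one child, i.e.\ the tree is a path.

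Next I would analyse the grouping procedure on this path. Since no node has two or three children, no node of the stacking tree can have a sibling, so the procedure can never form a group of type~II or type~III; it simply pairs the deepest remaining leaf with its parent into a type~I group and removes two nodes at a time. This yields $\beta=\gamma=0$ and $\alpha=k-1$, where $k$ is the total number of groups and $g_1$ is the special leftover group, which is either empty or contains the root alone depending on the parity of $n-3$.

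The counting step then relates $k$ to $n$. The stacking tree has $n-3$ nodes, of which $2(k-1)$ are consumed by the $k-1$ type~I groups and $|g_1|\in\{0,1\}$ by the leftover group, so $n-3=|g_1|+2(k-1)\ge 2(k-1)$ and therefore $n\ge 2k+1$, that is, $k\le (n-1)/2$. Substituting $\alpha=k-1$ and $\beta=\gamma=0$ into the bound $\alpha+2\beta+\gamma+2$ of Lemma~\ref{lem:addgroup} gives a path cover of size at most $k+1\le (n+1)/2$; since $k+1$ is an integer, $k+1\le\lfloor (n+1)/2\rfloor=\lceil n/2\rceil$, which is exactly the claimed bound.

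Because Lemma~\ref{lem:addgroup} already handles the actual construction and extension of the paths, there is essentially no hard analytic obstacle here. The only points that require care are the two purely combinatorial claims---that the serpentine condition genuinely forces the stacking tree to be a path (rather than merely a caterpillar or some other degenerate shape), and that this in turn excludes type~II and type~III groups---together with the small amount of parity bookkeeping that makes $k+1$ collapse exactly to $\lceil n/2\rceil$ whether $n-3$ is even (so $g_1$ is empty) or odd (so $g_1$ is the root).
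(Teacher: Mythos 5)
Your proof is correct and follows essentially the same route as the paper: observe that the serpentine condition makes the stacking tree a path, deduce that the group partition consists only of type~I groups plus the leftover group $g_1$ with $|g_1|\le 1$, and plug $\alpha=k-1$, $\beta=\gamma=0$ into Lemma~\ref{lem:addgroup} together with $n-3=|g_1|+2(k-1)$ to get $k+1\le\lceil n/2\rceil$. You merely make explicit two points the paper states without proof (that the stacking tree is a path, and the integrality step turning $(n+1)/2$ into $\lceil n/2\rceil$), which is fine.
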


The worst case for our algorithm is that the group partition only consists
of groups of type~II. In this case, we have $\alpha=\gamma=0$ and $\beta=k-1$.
We have $n-3=\sum_{i=1}^k |g_i|=|g_1|+3(k-1)\ge 3k-3$, which yields $n \ge 3k$. 
Thus, we can 
create a path cover with at most $2\beta+2=2k\le \lfloor 2n/3\rfloor$
groups.
Note that this bound was already proven by Dean and Kouider~\cite{DK00}
using Lov\'asz' construction.
However, we can combine our algorithm with the result of Dean and Kouider
to achieve a better bound.

\begin{theorem}\label{thm:planar3tree}
  Any planar 3-tree admits a path cover of size at 
  most~$\lfloor 5n/8\rfloor$.
\end{theorem}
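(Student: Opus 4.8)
The plan is to compute two path covers of~$G$ and output the smaller one. The first is the group-partition cover of Lemma~\ref{lem:addgroup}, of size at most $\alpha+2\beta+\gamma+2$, for which $n = 2\alpha+3\beta+3\gamma+|g_1|+3$ with $|g_1|\in\{0,1\}$. The second is the cover guaranteed by Dean and Kouider~\cite{DK00}, taken in its refined form $n_{\odd}/2+\lfloor 2n_{\even}/3\rfloor$ recalled in the introduction. First I would check that the group-partition cover already meets the target whenever the expensive type~II groups are not too frequent: substituting the vertex count, the inequality $\alpha+2\beta+\gamma+2\le\lfloor 5n/8\rfloor$ reduces (up to additive constants) to $\beta\le 2\alpha+7\gamma$. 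Thus only the type~II-dominant regime, where $\beta$ is large compared to $\alpha$ and $\gamma$, needs a separate treatment.

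For that regime I would invoke the refined Dean--Kouider bound. The structural input is Observation~\ref{obs:leafdegree3} together with the fact that every leaf of the stacking tree is a degree-$3$, hence odd-degree, vertex of~$G$. Writing $n_{\even}=n-n_{\odd}$ gives $n_{\odd}/2+2n_{\even}/3=2n/3-n_{\odd}/6$, so the Dean--Kouider cover drops to at most $5n/8$ precisely once $n_{\odd}\ge n/4$. I would therefore argue that a type~II-dominant planar 3-tree has many degree-$3$ vertices: in the extreme all-type~II case the stacking tree is essentially binary, which forces about $n/2$ leaves and hence $n_{\odd}\approx n/2$, making the Dean--Kouider cover as small as $7n/12<5n/8$. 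Interpolating between the two regimes, whenever the group-partition cover exceeds $5n/8$ the supply of odd-degree vertices has grown past $n/4$, so the Dean--Kouider cover takes over, and the two estimates are designed to meet exactly at $5n/8$.

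The main obstacle is the quantitative link between the type~II group count~$\beta$, which controls the size of the group-partition cover, and the number of odd-degree vertices, which controls the Dean--Kouider cover. The difficulty is that the grouping is defined dynamically, by repeatedly peeling deepest leaves, so a type~II group created late in the peeling may have as its two stacked members nodes that were \emph{internal} in the original stacking tree and therefore have degree larger than~$3$; such vertices need not be odd and do not directly furnish the odd-degree vertices that the Dean--Kouider bound consumes. I would resolve this by charging each expensive group not to its (possibly high-degree) members but to genuine degree-$3$ leaves of the stacking tree, using the leaf identity $\ell=n_2+2n_3+1$ for the static child-counts $n_2,n_3$ (the numbers of stacking-tree nodes with two and three children) and the leaf count~$\ell$. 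Carrying out the resulting optimization over the admissible degree sequences, and checking that the worst mixed configuration realizes $\lfloor 5n/8\rfloor$ and nothing larger, is the heart of the argument.
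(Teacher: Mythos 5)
Your plan is correct and follows essentially the same route as the paper: compute both the group-partition cover and the refined Dean--Kouider cover, split on whether type~II groups dominate, and in the dominant case lower-bound $n_{\odd}$ by charging each type~II group to its parent (a stacking-tree node with at least two children in the original tree) so that the number of leaves, hence of degree-3 vertices, is at least $\beta+1$. The only difference is cosmetic: your threshold $\beta\lesssim 2\alpha+7\gamma$ is an equivalent reformulation of the paper's $\beta\le n/4-1$, and the final ``optimization over degree sequences'' you defer collapses to the single observation that both bounds meet at $n_{\odd}=n/4$, i.e.\ $\beta\approx n/4$.
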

\begin{pf}
  Let~$\alpha$, $\beta$, and~$\gamma$ be the number of type~I, type~II, and 
  type~III groups in the group partition of a stacking tree.
  By the size of the groups, we have that 
  \[
    n \ge 3+2\alpha+3\beta+3\gamma \ge 3+2\alpha+3\beta+2\gamma,
  \]
  which can be rephrased as $2(\alpha+\gamma)\le n - 3\beta - 3$.
  If~$\beta \le n/4-1$, then by Lemma~\ref{lem:addgroup} we can find a path cover with size at most 
  \begin{align*}
    \alpha+2\beta+\gamma+2 & \le 2\beta + \frac{n - 3\beta - 3}{2} + 2
    = \frac{n + \beta+ 1}{2} 
     \le \frac{n}{2} + \frac{n}{8}  
    \le 
    \frac{5n}{8}
    .
  \end{align*}

  Recall that by the bound of Dean and Kouider every graph has a path cover of size 
  $n_{\odd}/2+\lfloor 2n_{\even}/3\rfloor$, where
  $n_{\odd}$ is the number of odd-degree vertices and 
  $n_{\even}$ is the number of even-degree vertices~\cite{DK00}.
  If~$\beta > n/4 - 1$, then we apply their construction. 
  Note that the number of leaves in any tree exceeds 
  the number of its nodes with degree higher than~2. Since any group of type~II
  contains at least one vertex of degree at least~3 (the parent), we know that
  the number of leaves in the stacking tree is at least~$\beta+1$. By
  Observation~\ref{obs:leafdegree3}, each leaf in the stacking tree represents
  a degree-3 vertex, so we have~$n_{\odd} \ge \beta + 1  > n/4$.
  Hence, the construction of Dean and Koudier yields a path cover of size at most
  \begin{align*}
    \frac{n_{\odd}}{2}+\left \lfloor\frac{2n_{\even}}{3}\right\rfloor
     \le \frac{n_{\odd}}{2}+\frac{2n-2n_{\odd}}{3} 
    < \frac{16n}{24} - \frac{n}{24} 
    \le 
    \frac{5n}{8} 
    .\tag*{\qed} 
  \end{align*}
\end{pf}
\vspace*{-1.5ex}
\subsubsection*{Acknowledgements.} We thank Jens M. Schmidt for helpful discussions.


\clearpage

\bibliographystyle{abbrv}
\bibliography{gallai}

\end{document}